\numberwithin{equation}{section}
\numberwithin{figure}{section}
\numberwithin{table}{section}
\theoremstyle{plain}
\newtheorem{thm}{\protect\theoremname}[section]
  \theoremstyle{definition}
  \newtheorem{defn}[thm]{\protect\definitionname}
  \theoremstyle{remark}
  \newtheorem{rem}[thm]{\protect\remarkname}
  \theoremstyle{plain}
  \newtheorem{lem}[thm]{\protect\lemmaname}
  \theoremstyle{plain}
  \newtheorem{cor}[thm]{\protect\corollaryname}
  \theoremstyle{definition}
  \newtheorem{example}[thm]{\protect\examplename}
  \theoremstyle{plain}
  \newtheorem{prop}[thm]{\protect\propositionname}
  \theoremstyle{remark}
  \newtheorem*{acknowledgement*}{\protect\acknowledgementname}
\providecommand{\MR}[1]{}
  \providecommand{\acknowledgementname}{Acknowledgement}
  \providecommand{\corollaryname}{Corollary}
  \providecommand{\definitionname}{Definition}
  \providecommand{\examplename}{Example}
  \providecommand{\lemmaname}{Lemma}
  \providecommand{\propositionname}{Proposition}
  \providecommand{\remarkname}{Remark}
\providecommand{\theoremname}{Theorem}
\begin{document}
\thispagestyle{plain}

\lhead{}

\rhead{}

\chead{INDUCED REPRESENTATIONS ARISING FROM A CHARACTER}

\title{Induced representations arising from a character with finite orbit
in a semidirect product}

\author{Palle Jorgensen}

\address{(Palle Jorgensen) Department of Mathematics, The University of Iowa,
Iowa City, IA 52242-1419, U.S.A. }

\email{palle-jorgensen@uiowa.edu}

\urladdr{http://www.math.uiowa.edu/\textasciitilde{}jorgen/}

\author{Feng Tian}

\address{(Feng Tian) Department of Mathematics, Trine University, IN 46703,
U.S.A.}

\email{tianf@trine.edu}

\subjclass[2000]{Primary 47L60, 46N30, 46N50, 42C15, 65R10, 05C50, 05C75, 31C20; Secondary
46N20, 22E70, 31A15, 58J65, 81S25}

\keywords{Hilbert space, unitary representation, induced representations, Fröbenius
reciprocity, semidirect products, imprimitivity, Pontryagin duality,
direct integral decompositions, wavelet-sets, non-commutative harmonic
analysis, Halmos-Rohlin, $C^{*}$-algebra, solenoid.}
\begin{abstract}
Making use of a unified approach to certain classes of induced representations,
we establish here a number of detailed spectral theoretic decomposition
results. They apply to specific problems from non-commutative harmonic
analysis, ergodic theory, and dynamical systems. Our analysis is in
the setting of semidirect products, discrete subgroups, and solenoids.
Our applications include analysis and ergodic theory of Bratteli diagrams
and their compact duals; of wavelet sets, and wavelet representations.

\tableofcontents{}
\end{abstract}

\maketitle

\section{Introduction}

The purpose of the present paper is to demonstrate how a certain induction
(from representation theory) may be applied to a number of problems
in dynamics, in spectral theory and harmonic analysis; yielding answers
in the form of explicit invariants, and equivalences. In more detail,
we show that a certain family of induced representations forms a unifying
framework. The setting is unitary representations, infinite-dimensional
semidirect products and crossed products. While the details in our
paper involve a variant of Mackey\textquoteright s theory, our approach
is extended, and it is constructive and algorithmic. Our starting
point is a family of imprimitivity systems. By this we mean a pair:
a unitary representation, and a positive operator-valued mapping,
subject to a covariance formula for the two (called imprimitivity).
One of its uses is a characterization of those unitary representations
of some locally compact group $G$ \textquotedblleft which arise\textquotedblright{}
as an induction from a subgroup; i.e., as induced from of a representation
of a specific subgroup of $G$. (Here, the notation \textquotedblleft which
arise\textquotedblright{} means \textquotedblleft up to unitary equivalence.\textquotedblright )

Below we give a summary of notation and terminologies, and we introduce
a family of induced representations, see e.g., \cite{Jor88,Mac88,Ors79}. 

The framework below is general; the context of locally compact (non-abelian)
groups. But we shall state the preliminary results in the context
of unimodular groups, although it is easy to modify the formulas and
the results given to non-unimodular groups. One only needs to incorporate
suitable factors on the respective modular functions, that of the
ambient group and that of the subgroup. Readers are referred to \cite{Ors79}
for additional details on this point. Another reason for our somewhat
restricted setting is that our main applications below will be to
the case of induction from suitable abelian subgroups. General terminology:
In the context of locally compact abelian groups, say $B$, we shall
refer to Pontryagin duality (see e.g., \cite{Rud62}); and so in particular,
when the abelian group $B$ is given, by \textquotedblleft the dual\textquotedblright{}
we mean the group of all continuous characters on $B$, i.e., the
one-dimensional unitary representations of $B$. We shall further
make use of Pontryagin's theorem to the effect that the double-dual
of $B$ is naturally isomorphic to $B$ itself.

Our results are motivated in part by a number of non-commutative harmonic
analysis issues involved in the analysis of wavelet representations,
and wavelet sets; see especially \cite{LPT01}; and also \cite{LSST06,MR2769882,MR3121682,MR3200927,MR3247020}.
\begin{defn}
Let $G$ be a locally compact group, $B\subset G$ a closed subgroup;
both assumed unimodular. Given  $V\in Rep\left(B,\mathscr{H}_{V}\right)$,
a representation of $B$ in the Hilbert space $\mathscr{H}_{V}$,
let 
\begin{equation}
U=Ind_{B}^{G}\left(V\right)\label{eq:id1}
\end{equation}
be the \emph{induced representation}. $U$ acts on the Hilbert space
$\mathscr{H}_{U}$ as follows:

$\mathscr{H}_{U}$ consists of all measurable functions $f:G\rightarrow\mathscr{H}_{V}$
s.t. 
\begin{align}
f\left(bg\right) & =V_{b}f\left(g\right),\;\forall b\in B,\:\forall g\in G;\;\mbox{and}\label{eq:id2}\\
\left\Vert f\right\Vert _{\mathscr{H}_{U}}^{2} & =\int_{B\backslash G}\left\Vert f\left(g\right)\right\Vert ^{2}d\mu_{B\backslash G}\left(g\right)<\infty\label{eq:id3}
\end{align}
w.r.t. the measure on the homogeneous space $B\backslash G$, i.e.,
the Hilbert space carrying the induced representation. Set 
\begin{equation}
\left(U_{g}f\right)\left(x\right):=f\left(xg\right),\quad x,g\in G.\label{eq:id4}
\end{equation}
\end{defn}
\begin{rem}
If the respective groups $B$ and $G$ are non-unimodular, we select
respective right-invariant Haar measures $db$, $dg$; and corresponding
modular functions $\triangle_{B}$ and $\triangle_{G}$. In this case,
the modification to the above is that eq. (\ref{eq:id2}) will instead
be 
\begin{equation}
f\left(bg\right)=\left(\frac{\triangle_{B}\left(b\right)}{\triangle_{G}\left(b\right)}\right)^{1/2}V_{b}f\left(g\right);\;b\in B,\:g\in G;\label{eq:id4-1}
\end{equation}
thus a modification in the definition of $\mathscr{H}(ind_{B}^{G}\left(V\right))$. 
\end{rem}
We recall the following theorems of Mackey \cite{Mac88,Ors79}. 
\begin{thm}[Mackey]
 $U=Ind_{B}^{G}\left(V\right)$ is a unitary representation. 
\end{thm}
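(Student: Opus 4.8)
The plan is to verify in turn the three defining properties of a unitary representation: that each $U_g$ carries $\mathscr{H}_U$ into itself, that $g\mapsto U_g$ is a homomorphism taking unitary values, and that this map is strongly continuous. First I would check that $U_g f$ again satisfies the covariance condition \eqref{eq:id2}. For $b\in B$ and $x\in G$ one computes directly
\begin{equation*}
\left(U_g f\right)\left(bx\right)=f\left(bxg\right)=V_b f\left(xg\right)=V_b\left(U_g f\right)\left(x\right),
\end{equation*}
where the middle equality is \eqref{eq:id2} applied to the element $xg$. Hence $U_g f$ lies in the same equivariance class as $f$, so $U_g$ preserves the space of covariant functions.

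Next I would establish the homomorphism property. Using the right-translation form of the action \eqref{eq:id4},
\begin{equation*}
\left(U_g U_h f\right)\left(x\right)=\left(U_h f\right)\left(xg\right)=f\left(xgh\right)=\left(U_{gh}f\right)\left(x\right),
\end{equation*}
so $U_g U_h=U_{gh}$ and $U_e=I$. For the isometry property I would use that $V$ is unitary, giving $\left\Vert f\left(bx\right)\right\Vert^2=\left\Vert V_b f\left(x\right)\right\Vert^2=\left\Vert f\left(x\right)\right\Vert^2$; thus $x\mapsto\left\Vert f\left(x\right)\right\Vert^2$ is constant on left $B$-cosets and descends to a genuine function on $B\backslash G$, making the integral in \eqref{eq:id3} meaningful. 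The key input is that, since $B$ and $G$ are assumed unimodular, the homogeneous space $B\backslash G$ carries a $G$-invariant measure $\mu_{B\backslash G}$ (the modular functions agree on $B$). Invoking right $G$-invariance,
\begin{equation*}
\left\Vert U_g f\right\Vert_{\mathscr{H}_U}^2=\int_{B\backslash G}\left\Vert f\left(xg\right)\right\Vert^2 d\mu_{B\backslash G}\left(x\right)=\int_{B\backslash G}\left\Vert f\left(x\right)\right\Vert^2 d\mu_{B\backslash G}\left(x\right)=\left\Vert f\right\Vert_{\mathscr{H}_U}^2,
\end{equation*}
so each $U_g$ is an isometry. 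Since $U_g U_{g^{-1}}=U_{g^{-1}}U_g=U_e=I$, each $U_g$ is invertible with bounded inverse, hence a surjective isometry, i.e. unitary.

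The hard part is the strong continuity of $g\mapsto U_g$, i.e. showing $\left\Vert U_g f-f\right\Vert_{\mathscr{H}_U}\to 0$ as $g\to e$ for every $f\in\mathscr{H}_U$. Here I would first argue that the continuous covariant functions with support compact modulo $B$ form a dense subspace of $\mathscr{H}_U$; such a $\varphi$ can be produced from $\psi\in C_c\left(G,\mathscr{H}_V\right)$ by averaging against $V$ over $B$, and density follows from the corresponding density in $L^2\left(B\backslash G\right)$. For such a $\varphi$, uniform continuity on the relevant compact set yields $\left\Vert U_g\varphi-\varphi\right\Vert_{\mathscr{H}_U}\to 0$ directly. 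Then, because every $U_g$ has norm one, a standard $\varepsilon/3$ approximation transfers continuity from the dense subspace to all of $\mathscr{H}_U$: choose $\varphi$ with $\left\Vert f-\varphi\right\Vert<\varepsilon/3$ and estimate $\left\Vert U_g f-f\right\Vert\le\left\Vert U_g(f-\varphi)\right\Vert+\left\Vert U_g\varphi-\varphi\right\Vert+\left\Vert\varphi-f\right\Vert$. The only genuine subtlety is the interplay between the measurability and support conditions on $G$ and those on the quotient $B\backslash G$; handling a Borel cross-section and approximating $L^2$-classes by continuous representatives is where the measure-theoretic care is needed, but no idea beyond the invariance of $\mu_{B\backslash G}$ is required.
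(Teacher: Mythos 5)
Your proof is correct, but note that the paper itself does not prove this statement at all: it is recalled as a known theorem of Mackey, with the proof deferred to the cited references (Mackey and {\O}rsted; see \cite{Mac88,Ors79}). What you have written is essentially the standard verification found in those sources, and it is sound: preservation of the covariance condition \eqref{eq:id2}, the homomorphism identity $U_gU_h=U_{gh}$ under right translation, unitarity via the existence of a $G$-invariant measure on $B\backslash G$ (where you correctly identify unimodularity of $B$ and $G$ as the hypothesis that makes such a measure exist --- this is exactly why the paper restricts to the unimodular setting, with the modular-function correction \eqref{eq:id4-1} needed otherwise), and strong continuity by an $\varepsilon/3$ argument from the dense subspace of continuous covariant functions obtained by $B$-averaging elements of $C_c\left(G,\mathscr{H}_V\right)$. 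One remark worth adding: in the applications the paper actually pursues (Sections \ref{sec:ind} and \ref{sec:irr}), $B$ and $G=B\rtimes_\alpha\mathbb{Z}$ are discrete, the invariant measure on $B\backslash G\simeq\mathbb{Z}$ is counting measure, and strong continuity is automatic; so the only part of your argument requiring genuine analytic care --- the cross-section and density issues you flag at the end --- evaporates in the setting the paper uses, while your general argument covers the locally compact case in which the theorem is stated.
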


\begin{thm}[Imprimitivity Theorem]
\label{thm:s1-4} A representation $U\in Rep\left(G,\mathscr{H}\right)$
is induced iff $\exists$ a positive operator-valued mapping
\begin{equation}
\pi:C_{c}\left(B\backslash G\right)\longrightarrow\mathscr{B}\left(\mathscr{H}\right)\;\mbox{s.t.}\label{eq:id5}
\end{equation}
\begin{equation}
U_{g}\pi\left(\varphi\right)U_{g}^{*}=\pi\left(R_{g}\varphi\right)\label{eq:id6}
\end{equation}
$\forall g\in G$, $\forall\varphi\in C_{c}\left(B\backslash G\right)$;
where $\pi$ is non-degenerate, and $R_{g}$ in (\ref{eq:id6}) denotes
the right regular action.\end{thm}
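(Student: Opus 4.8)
The plan is to treat the two implications separately, since the \textquotedblleft only if\textquotedblright{} direction is a direct verification while the \textquotedblleft if\textquotedblright{} direction carries essentially all of the content. Throughout I identify functions on the homogeneous space $B\backslash G$ (right cosets $Bg$) with left-$B$-invariant functions on $G$.

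For the easy direction, suppose $U=Ind_{B}^{G}\left(V\right)$ acts on $\mathscr{H}_{U}$ as in the Definition. Since $V$ is unitary, the covariance $f\left(bg\right)=V_{b}f\left(g\right)$ forces $\left\Vert f\left(bg\right)\right\Vert =\left\Vert f\left(g\right)\right\Vert $, so $g\mapsto\left\Vert f\left(g\right)\right\Vert ^{2}$ descends to $B\backslash G$ and the norm (\ref{eq:id3}) is well defined. Given $\varphi\in C_{c}\left(B\backslash G\right)$, lift it to the left-$B$-invariant function $\tilde{\varphi}$ on $G$ and set $\left(\pi\left(\varphi\right)f\right)\left(g\right):=\tilde{\varphi}\left(g\right)f\left(g\right)$. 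Multiplication by a left-$B$-invariant scalar preserves the transformation law (\ref{eq:id2}), so $\pi\left(\varphi\right)$ maps $\mathscr{H}_{U}$ into itself; it is bounded by $\left\Vert \varphi\right\Vert _{\infty}$, positive for $\varphi\geq0$, and non-degenerate. The covariance (\ref{eq:id6}) is then the one-line computation $\left(U_{g}\pi\left(\varphi\right)U_{g}^{*}f\right)\left(x\right)=\tilde{\varphi}\left(xg\right)f\left(x\right)=\left(\pi\left(R_{g}\varphi\right)f\right)\left(x\right)$, using the definition (\ref{eq:id4}) of $U$ together with $\widetilde{R_{g}\varphi}\left(x\right)=\tilde{\varphi}\left(xg\right)$.

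For the hard direction I would first upgrade $\pi$ to a representation of $C_{0}\left(B\backslash G\right)$ and invoke the spectral theorem to produce a projection-valued measure $P$ on $B\backslash G$ with $\pi\left(\varphi\right)=\int\varphi\,dP$; in these terms the imprimitivity relation (\ref{eq:id6}) becomes the geometric covariance $U_{g}P\left(E\right)U_{g}^{*}=P\left(Eg^{-1}\right)$, since $R_{g}\chi_{E}=\chi_{Eg^{-1}}$. Fixing a quasi-invariant measure $\mu$ on $B\backslash G$ in the class of $P$, I would diagonalize $P$ by a direct integral decomposition $\mathscr{H}\cong\int_{B\backslash G}^{\oplus}\mathscr{H}_{x}\,d\mu\left(x\right)$, so that each $P\left(E\right)$ becomes multiplication by $\chi_{E}$. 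The covariance then says that $U_{g}$ carries the fiber $\mathscr{H}_{x}$ isometrically onto $\mathscr{H}_{xg^{-1}}$, up to a Radon--Nikodym factor $\left(d\mu_{g}/d\mu\right)^{1/2}$; that is, $U$ encodes a measurable field of unitaries, a cocycle for the $G$-action on $B\backslash G$.

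The remaining step, and the main obstacle, is to convert this cocycle into a genuine unitary representation $V$ of $B$ and to exhibit the intertwiner. Choosing a Borel cross-section $s:B\backslash G\rightarrow G$ (available once $G$ is second countable, by Mackey's selection theorem), I would trivialize the field against the base fiber $\mathscr{H}_{V}:=\mathscr{H}_{x_{0}}$ over $x_{0}=Be$; the cocycle, evaluated on the stabilizer $B$, then descends to a map $B\rightarrow\mathcal{U}\left(\mathscr{H}_{V}\right)$, and the cocycle identity is precisely the homomorphism property making $V$ a representation. A unitary $W:\mathscr{H}\rightarrow\mathscr{H}\left(Ind_{B}^{G}V\right)$ is built from $s$ and the cocycle, and a final computation gives $WU_{g}=\left(Ind_{B}^{G}V\right)_{g}W$. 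The genuine difficulties are all measure-theoretic: the standard-Borel structure of $B\backslash G$, existence and measurability of the cross-section, and measurability of the field $x\mapsto\mathscr{H}_{x}$ together with its cocycle. These are the points where second countability (or an analogous regularity hypothesis on $G$) enters, and where one must appeal to Mackey's measure-theoretic machinery rather than to soft functional analysis alone.
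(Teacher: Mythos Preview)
Your sketch is correct and follows the classical Mackey route: construct $\pi$ as multiplication for the forward direction, and for the converse pass to a projection-valued measure, disintegrate $\mathscr{H}$ over $B\backslash G$, and use a Borel cross-section to extract the inducing representation $V$ from the resulting cocycle. The paper itself does not give a proof at all; it simply cites \cite{Ors79,Jor88}, and what you have written is precisely the argument one finds there (modulo the Radon--Nikodym corrections in the non-unimodular case, which the paper also sets aside). So there is nothing to compare: you have supplied the standard proof that the paper chose to outsource.
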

\begin{proof}
See \cite{Ors79,Jor88}.\end{proof}
\begin{thm}
Given $U\in Rep\left(G,\mathscr{H}\right)$, the following are equivalent:
\begin{enumerate}
\item $\exists\pi$ s.t. (\ref{eq:id6}) holds.
\item $\exists V\in Rep\left(B,\mathscr{H}_{V}\right)$ s.t. $U\cong Ind_{B}^{G}\left(V\right)$.
\item $L_{G}\left(U,\pi\right)\cong L_{B}\left(V\right)$
\item If $V_{i}\in Rep\left(B,\mathscr{H}_{V_{i}}\right)$, $i=1,2$ and
$U_{i}=Ind_{B}^{G}\left(V_{i}\right)$, $i=1,2$, then 
\[
L_{G}\left(\left(U_{1},\pi_{1}\right);\left(U_{2},\pi_{2}\right)\right)\cong L_{B}\left(V_{1},V_{2}\right);
\]
i.e., all intertwining operators $V_{1}\rightarrow V_{2}$ lift to
the pair $\left(U_{i},\pi_{i}\right)$, $i=1,2$. 
\end{enumerate}

(Here, ``$\cong$'' denotes unitary equivalence.) Specifically,
\begin{align}
 & L_{G}\left(\left(U_{1},\pi_{1}\right),\left(U_{2},\pi_{2}\right)\right)\nonumber \\
= & \Big\{ W:\mathscr{H}_{U_{1}}\rightarrow\mathscr{H}_{U_{2}}\;\big|\;WU_{1}\left(g\right)=U_{2}\left(g\right)W,\;\forall g\in G,\;\mbox{and}\nonumber \\
 & \quad U\pi_{1}\left(\varphi\right)=\pi_{2}\left(\varphi\right)W\Big\};
\end{align}
and 
\begin{equation}
L_{B}\left(V_{1},V_{2}\right)=\left\{ w:\mathscr{H}_{V_{1}}\rightarrow\mathscr{H}_{V_{2}}\:\big|\:wV_{1}\left(b\right)=V_{2}\left(b\right)w,\;\forall b\in B.\right\} 
\end{equation}

\end{thm}
We also recall the following result:
\begin{lem}
Let $B$ be a lattice in $\mathbb{R}^{d}$, let $\alpha$ be an action
of $\mathbb{Z}$ on $B$ by automorphisms, and let $(K=\widehat{B},\widehat{\alpha})$
represent the dual action of $\mathbb{Z}$ on the compact abelian
group $K$. Fix a $d\times d$ matrix $A$ preserving the lattice
$B$ with $spec\left(A\right)\subset\left\{ \lambda\::\:\left|\lambda\right|>1\right\} $. 

If $\alpha=\alpha_{A}\in Aut\left(B\right)$, then $\widehat{\alpha}\in Aut\left(K\right)$
is ergodic, i.e., if $\nu$ is the normalized Haar measure on $K$
and if $E\subset K$ is measurable s.t. $\widehat{\alpha}E=E$, then
$\nu\left(E\right)\left(1-\nu\left(E\right)\right)=0$.\end{lem}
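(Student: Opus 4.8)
The plan is to translate ergodicity into a Fourier-analytic statement on $K=\widehat{B}$ and then read off the conclusion from the spectral hypothesis on $A$. Since $B$ is a rank-$d$ lattice, $K=\widehat{B}$ is a compact abelian group, and by Pontryagin's double-dual theorem (invoked in the Introduction) its character group $\widehat{K}$ is canonically identified with $B$ itself; writing $\{\chi_{n}:n\in B\}$ for the associated characters, this family is an orthonormal basis of $L^{2}\left(K,\nu\right)$. The dual map $\widehat{\alpha}$ is a continuous (endo)morphism of $K$ and hence preserves the normalized Haar measure $\nu$. I would use the standard equivalence that $\widehat{\alpha}$ is ergodic if and only if every $\widehat{\alpha}$-invariant $f\in L^{2}\left(K,\nu\right)$ is constant $\nu$-a.e.: indeed, if $E$ is measurable with $\widehat{\alpha}E=E$, then $\mathbf{1}_{E}$ is invariant, so constancy of invariant functions forces $\mathbf{1}_{E}\in\{0,1\}$ a.e., i.e. $\nu\left(E\right)\big(1-\nu\left(E\right)\big)=0$, which is exactly the asserted conclusion.

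First I would compute the action of $\widehat{\alpha}$ on the orthonormal basis. By definition of the dual action together with the identification $\widehat{K}\cong B$, pulling a character back along $\widehat{\alpha}$ reproduces the action of $\alpha=\alpha_{A}$ on the index set $B$; concretely $\chi_{n}\circ\widehat{\alpha}=\chi_{An}$. (The precise form of the matrix here --- $A$, $A^{\mathsf{T}}$, or an inverse --- depends on sign conventions but is immaterial below, since all of these share the spectrum of $A$ up to reciprocals.) Thus $\widehat{\alpha}$ permutes the basis $\{\chi_{n}\}$ according to the map $n\mapsto An$ on $B$.

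Next I would expand an invariant $f=\sum_{n\in B}c_{n}\chi_{n}$ and compare Fourier coefficients in $f\circ\widehat{\alpha}=f$; this shows that the coefficients are constant along the $A$-orbits in $B$, that is $c_{n}=c_{An}$ for every $n$. The decisive step is to prove that the only finite $A$-orbit in $B$ is $\{0\}$: if $A^{k}n=n$ for some $k\geq1$ and some $n\neq0$, then $1\in\mathrm{spec}\left(A^{k}\right)$, which is impossible, because $\mathrm{spec}\left(A\right)\subset\{\lambda:\left|\lambda\right|>1\}$ forces $\mathrm{spec}\left(A^{k}\right)\subset\{\lambda:\left|\lambda\right|>1\}$, so $A^{k}-I$ is invertible and $n=0$. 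Hence every nonzero $n\in B$ lies on an infinite orbit.

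Finally, on each infinite orbit the coefficients $c_{n}$ are all equal, while $\sum_{n}\left|c_{n}\right|^{2}<\infty$; this is possible only if that common value is $0$. Therefore $c_{n}=0$ for all $n\neq0$, so $f=c_{0}$ is constant, and ergodicity follows. The only genuinely substantive point is the no--nontrivial--periodic--point claim of the previous paragraph; everything else is bookkeeping with the character basis. I expect the main obstacle to be pinning down the duality identification $\chi_{n}\circ\widehat{\alpha}=\chi_{An}$ with the correct normalization, and confirming that the expanding hypothesis (all $\left|\lambda\right|>1$) is precisely what excludes roots of unity from the spectrum, hence excludes nonzero periodic points.
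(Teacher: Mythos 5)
Your proposal is correct, and its skeleton coincides with the paper's: both reduce ergodicity of $\widehat{\alpha}$ to the assertion that the only $A$-periodic point in $B$ is $0$, and both settle that assertion by the same spectral observation (if $A^{k}b=b$ with $b\neq0$ then $1\in\mathrm{spec}(A^{k})$, which is impossible when every eigenvalue of $A$ has modulus $>1$). The difference is one of packaging: the paper invokes the Halmos--Rohlin theorem as a black box and only verifies its hypothesis, whereas you prove that reduction from scratch --- expanding an invariant $f\in L^{2}(K,\nu)$ in the character basis $\{\chi_{n}\}_{n\in B}$ furnished by Pontryagin duality, noting that $\widehat{\alpha}$ permutes the characters by $n\mapsto An$, concluding that the Fourier coefficients are constant along $A$-orbits, and using square-summability to force the coefficients to vanish on every infinite orbit. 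That Fourier argument is exactly the standard proof of the criterion the paper cites, and you execute it correctly (including the observation that an invariant set yields an invariant indicator function, and that measure-preservation of $\widehat{\alpha}$ follows from uniqueness of Haar measure). What your version buys is self-containedness: no appeal to an external ergodic-theory reference, at the cost of some routine bookkeeping. What the paper's version buys is brevity. Your parenthetical concern about whether the induced permutation of $B$ is by $A$, $A^{\mathsf{T}}$, or an inverse is indeed immaterial, for the reason you give: all of these have the same set of periodic points, namely $\{0\}$ under the stated spectral hypothesis.
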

\begin{proof}
By Halmos-Rohlin's theorem (see \cite{BJ91}), we must show that if
$b\in B$ and $A^{n}b=b$ for some $n\in\mathbb{N}$, then $b=0$.
This then follows from the assumption on the spectrum of $A$.
\end{proof}

\section{\label{sec:ind}The representation $Ind_{B}^{G}\left(\chi\right)$}

We now specify our notation and from this point on will restrict our
study to the following setting:
\begin{enumerate}[label=(\roman{enumi})]
\item $B$ -- a discrete abelian group (written additively)
\item $\alpha\in Aut\left(B\right)$
\item $G_{\alpha}:=B\rtimes_{\alpha}\mathbb{Z}$
\item $K:=\widehat{B}$, the compact dual group
\item $\widehat{\alpha}\in Aut(\widehat{B})$, dual action
\item $L_{\alpha}:=K\rtimes_{\widehat{\alpha}}\mathbb{Z}$, the $C^{*}$-algebra
crossed product \cite[p.299]{BJ91}; also written as $C^{*}\left(K\right)\rtimes_{\widehat{\alpha}}\mathbb{Z}$.
\end{enumerate}

More generally, let $K$ be a compact Hausdorff space, and $\beta:K\rightarrow K$
a homeomorphism; then we study the $C^{*}$-crossed product $C^{*}\left(K\right)\rtimes_{\beta}\mathbb{Z}$
(see \cite{BJ91}).

We define the \emph{induced representation}
\begin{equation}
U^{\chi}:=Ind_{B}^{G}\left(\chi\right)\label{eq:ir1}
\end{equation}
where $\chi\in K$, i.e., a character on $B$, and 
\begin{equation}
G:=B\rtimes_{\alpha}\mathbb{Z}\label{eq:ir2}
\end{equation}
as a semi-direct product. Note that $U^{\chi}$ in (\ref{eq:ir1})
is induced from a one-dimensional representation. 

Below, $\left\{ \delta_{k}\right\} _{k\in\mathbb{Z}}$ denotes the
canonical basis in $l^{2}\left(\mathbb{Z}\right)$, i.e., 
\[
\delta_{k}=\left(\cdots,0,0,1,0,0,\cdots\right)
\]
with ``1'' at the $k^{th}$ place. 
\begin{lem}
The representation $Ind_{B}^{G}\left(\chi\right)$ is unitarily equivalent
to 
\begin{equation}
U^{\chi}:G\rightarrow\mathscr{B}\left(l^{2}\left(\mathbb{Z}\right)\right),\;\mbox{where}\label{eq:p0}
\end{equation}
\begin{equation}
U_{\left(j,b\right)}^{\chi}\delta_{k}=\chi\left(\alpha_{k-j}\left(b\right)\right)\delta_{k-j},\quad\left(j,b\right)\in G.\label{eq:p1}
\end{equation}
\end{lem}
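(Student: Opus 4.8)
The plan is to realize the abstract induced space $\mathscr{H}_{U}$ concretely as $l^{2}\left(\mathbb{Z}\right)$ by choosing a Borel section for the quotient $B\backslash G$, and then to transport the action $\left(U_{g}f\right)\left(x\right)=f\left(xg\right)$ of (\ref{eq:id4}) through this identification. Throughout I write elements of $G=B\rtimes_{\alpha}\mathbb{Z}$ as pairs $\left(j,b\right)$ with $j\in\mathbb{Z}$, $b\in B$, embed $B\hookrightarrow G$ by $b\mapsto\left(0,b\right)$, and use the standard semidirect-product law
\[
\left(j_{1},b_{1}\right)\left(j_{2},b_{2}\right)=\left(j_{1}+j_{2},\;b_{1}+\alpha_{j_{1}}\left(b_{2}\right)\right),
\]
under which $B$ is the normal subgroup $\left\{ \left(0,b\right):b\in B\right\} $ and conjugation by $\left(j,0\right)$ acts on $B$ by $\alpha_{j}$. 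Since $B$, $\mathbb{Z}$, and hence $G$ are all discrete, every group here is unimodular and the relevant Haar measures are counting measures, so no modular-function correction of the type (\ref{eq:id4-1}) is needed.

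First I would identify the homogeneous space. A direct computation gives $\left(0,c\right)\left(j,b\right)=\left(j,c+b\right)$, so the left coset $B\left(j,b\right)$ depends only on $j$; hence $B\backslash G\cong\mathbb{Z}$, with $\mu_{B\backslash G}$ the counting measure, and I take the section $j\mapsto\left(j,0\right)$. The covariance relation (\ref{eq:id2}), which for the one-dimensional $V=\chi$ reads $f\left(\left(0,b\right)g\right)=\chi\left(b\right)f\left(g\right)$, together with $\left(k,b\right)=\left(0,b\right)\left(k,0\right)$, shows that any $f\in\mathscr{H}_{U}$ satisfies $f\left(k,b\right)=\chi\left(b\right)f\left(k,0\right)$ and is therefore determined by its restriction to the section. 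Using (\ref{eq:id3}) I then define
\[
W:\mathscr{H}_{U}\longrightarrow l^{2}\left(\mathbb{Z}\right),\qquad Wf=\sum_{k\in\mathbb{Z}}f\left(k,0\right)\delta_{k},
\]
and observe that $\left\Vert Wf\right\Vert ^{2}=\sum_{k}\left|f\left(k,0\right)\right|^{2}=\left\Vert f\right\Vert _{\mathscr{H}_{U}}^{2}$; surjectivity is clear since the values on the section are unconstrained apart from square-summability, so $W$ is unitary.

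It remains to verify that $W$ intertwines $Ind_{B}^{G}\left(\chi\right)$ with the operator of (\ref{eq:p1}). Evaluating (\ref{eq:id4}) on the section and applying the covariance relation,
\[
\bigl(U_{\left(j,b\right)}f\bigr)\left(k,0\right)=f\bigl(\left(k,0\right)\left(j,b\right)\bigr)=f\bigl(k+j,\alpha_{k}\left(b\right)\bigr)=\chi\left(\alpha_{k}\left(b\right)\right)f\left(k+j,0\right).
\]
On the other side, applying (\ref{eq:p1}) to $Wf=\sum_{k}f\left(k,0\right)\delta_{k}$ and collecting the coefficient of $\delta_{m}$ (i.e.\ setting $k-j=m$) yields $\chi\left(\alpha_{m}\left(b\right)\right)f\left(m+j,0\right)$. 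These agree, so $WU_{\left(j,b\right)}=U_{\left(j,b\right)}^{\chi}W$ for all $\left(j,b\right)\in G$, which proves the claimed unitary equivalence. The only real obstacle is bookkeeping: one must keep the semidirect-product convention, the choice of section, and the direction of the shift mutually consistent, so that the iterate $\alpha_{k}$ produced by conjugation on the section reproduces, after the index shift $k-j=m$, exactly the twist $\chi\left(\alpha_{k-j}\left(b\right)\right)$ displayed in (\ref{eq:p1}).
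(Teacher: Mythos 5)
Your proof is correct and follows essentially the same route as the paper's: identify $B\backslash G\cong\mathbb{Z}$, restrict elements of the induced space to the section $j\mapsto\left(j,0\right)$ to obtain a unitary onto $l^{2}\left(\mathbb{Z}\right)$, and verify the intertwining relation by a direct computation with the semidirect-product multiplication law. The only cosmetic difference is the direction of the unitary: your $W$ maps $\mathscr{H}_{U}\rightarrow l^{2}\left(\mathbb{Z}\right)$, which is exactly the inverse of the operator $\left(W\xi\right)\left(j,b\right)=\chi\left(b\right)\xi_{j}$ used in the paper.
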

\begin{proof}
First note that both groups $B$ and $G=B\rtimes_{\alpha}\mathbb{Z}$
are discrete and unimodular, and the respective Haar measures are
the counting measure. Since $B\backslash G\simeq\mathbb{Z}$, so $\mathscr{H}^{\chi}=L^{2}\left(B\backslash G\right)\simeq l^{2}\left(\mathbb{Z}\right)$.
Recall the multiplication rule in $G$: 
\begin{equation}
\left(k,c\right)\left(j,b\right)=\left(k+j,\alpha_{k}\left(b\right)+c\right)\label{eq:p2.5-1}
\end{equation}
where $j,k\in\mathbb{Z}$, and $b,c\in B$.

The representation $Ind_{B}^{G}\left(\chi\right)$ acts on functions
$f:G\rightarrow\mathbb{C}$ s.t.
\begin{equation}
f\left(\left(j,b\right)\right)=\chi\left(b\right)f\left(j,0\right)\label{eq:p6}
\end{equation}
and  
\begin{equation}
\left\Vert f\right\Vert _{\mathscr{H}^{\chi}}^{2}=\sum_{j\in\mathbb{Z}}\left|f\left(j,0\right)\right|^{2}\label{eq:2.7}
\end{equation}

Moreover, the mapping 
\begin{equation}
l^{2}\ni\xi\longmapsto W\xi\in\mathscr{H}^{\lambda}\label{eq:2.8}
\end{equation}
given by 
\begin{equation}
\left(W\xi\right)\left(j,b\right)=\chi\left(b\right)\xi_{j}\label{eq:2.9}
\end{equation}
is a unitary intertwining operator, i.e., 
\begin{equation}
Ind_{B}^{G}\left(\chi\right)W\xi=WU^{\chi}\xi\label{eq:2.10}
\end{equation}
where
\begin{equation}
\left(U_{\left(j,b\right)}^{\chi}\xi\right)_{k}=\chi\left(\alpha_{k}\left(b\right)\right)\xi_{k+j}\label{eq:2.11}
\end{equation}
$\forall j,k\in\mathbb{Z}$, $\forall b\in B$, $\forall\xi\in l^{2}\left(\mathbb{Z}\right)$.
Note that (\ref{eq:2.11}) is equivalent to (\ref{eq:p1}).

To verify (\ref{eq:2.10}), we have 
\begin{eqnarray*}
\left(Ind_{B}^{G}\left(\chi\right)_{\left(j,b\right)}W\xi\right)\left(k,c\right) & = & W\xi\left(\left(k,c\right)\left(j,b\right)\right)\\
 & \underset{\eqref{eq:p2.5-1}}{=} & W\xi\left(k+j,\alpha_{k}\left(b\right)+c\right)\\
 & \underset{\eqref{eq:p6}}{=} & \chi\left(\alpha_{k}\left(b\right)+c\right)W\xi\left(k+j,0\right)\\
 & \underset{\eqref{eq:2.9}}{=} & \chi\left(\alpha_{k}\left(b\right)+c\right)\xi_{k+j}
\end{eqnarray*}
and 
\begin{eqnarray*}
\left(WU_{\left(j,b\right)}^{\chi}\xi\right)\left(k,c\right) & \underset{\eqref{eq:2.9}}{=} & \chi\left(c\right)\left(U_{\left(j,b\right)}^{\chi}\xi\right)_{k}\\
 & \underset{\eqref{eq:2.11}}{=} & \chi\left(c\right)\chi\left(\alpha_{k}\left(b\right)\right)\xi_{k+j}\\
 & = & \chi\left(\alpha_{k}\left(b\right)+c\right)\xi_{k+j}.
\end{eqnarray*}
In the last step we use that $\chi\left(\alpha_{k}\left(b\right)+c\right)=\chi\left(\alpha_{k}\left(b\right)\right)\chi\left(c\right)$,
which is just the representation property of $\chi:B\rightarrow\mathbb{T}=\left\{ z\in\mathbb{C}\:|\:\left|z\right|=1\right\} $,
\begin{equation}
\chi\left(b_{1}+b_{2}\right)=\chi\left(b_{1}\right)\chi\left(b_{2}\right),\quad\forall b_{1},b_{2}\in B.\label{eq:2.12}
\end{equation}
\end{proof}
\begin{rem}
In summary, the representation $Ind_{B}^{G}\left(\chi\right)$ has
three equivalent forms:
\begin{enumerate}
\item On $l^{2}\left(\mathbb{Z}\right)$, 
\begin{equation}
\left(Ind_{B}^{G}\left(\chi\right)\xi\right)_{k}=\chi\left(\alpha_{k}\left(b\right)\right)\xi_{k+j},\quad\left(\xi_{k}\right)\in l^{2}\left(\mathbb{Z}\right);\label{eq:2.24}
\end{equation}

\item Or in the ONB $\left\{ \delta_{k}\:|\:k\in\mathbb{Z}\right\} $, 
\begin{equation}
Ind_{B}^{G}\left(\chi\right)_{\left(j,k\right)}\delta_{k}=\chi\left(\alpha_{k-j}\left(b\right)\right)\delta_{k-j},\quad k\in\mathbb{Z};\label{eq:2.25}
\end{equation}

\item On $\mathscr{H}^{\chi}$, consisting of functions $f:G\rightarrow\mathbb{C}$
s.t. 
\[
f\left(j,b\right)=\chi\left(b\right)f\left(j,0\right),\quad\left\Vert f\right\Vert _{\mathscr{H}^{\lambda}}^{2}=\sum_{j\in\mathbb{Z}}\left|f\left(j,0\right)\right|^{2}<\infty
\]
where 
\begin{align*}
\left(Ind_{B}^{G}\left(\chi\right)_{\left(j,b\right)}f\right)\left(k,c\right) & =f\left(\left(k,c\right)\left(j,b\right)\right)\\
 & =f\left(\left(k+j,\alpha_{k}\left(b\right)+c\right)\right)\\
 & =\chi\left(\alpha_{k}\left(b\right)+c\right)f\left(k+j,0\right);
\end{align*}
with $\left(j,b\right),\left(k,c\right)\in G=B\rtimes_{\alpha}\mathbb{Z}$,
i.e., $j,k\in\mathbb{Z}$, $b,c\in B$.
\end{enumerate}
\end{rem}

\section{\label{sec:irr}Irreducibility}

Let $G=B\rtimes_{\alpha}\mathbb{Z}$, and $Ind_{B}^{G}\left(\chi\right)$
be the induced representation. Our main results are summarized as
follows:
\begin{enumerate}
\item If $\chi\in K=\widehat{B}$ has infinite order, then $Ind_{B}^{G}\left(\chi\right)\in Rep_{irr}\left(G,l^{2}\left(\mathbb{Z}\right)\right)$,
i.e., $Ind_{B}^{G}\left(\chi\right)$ is irreducible. We have 
\begin{equation}
Ind_{B}^{G}\left(\chi\right)_{\left(j,b\right)}=D_{\chi}\left(b\right)T_{j}\label{eq:s1}
\end{equation}
where $D_{\chi}\left(b\right)$ is diagonal, and $T_{j}:l^{2}\left(\mathbb{Z}\right)\rightarrow l^{2}\left(\mathbb{Z}\right)$,
$\left(T_{j}\xi\right)_{k}=\xi_{k+j}$, for all $\xi\in l^{2}\left(\mathbb{Z}\right)$. 
\item Suppose $\chi$ has finite order $p$, i.e., $\exists p$ s.t. $\widehat{\alpha}^{p}\chi=\chi$,
but $\widehat{\alpha}^{k}\chi\neq\chi$, $\forall1\leq k<p$. Set
$U_{p}^{\chi}=Ind_{B}^{G}\left(\chi\right)$, then 
\begin{equation}
U_{p}^{\chi}\left(j,b\right)=D_{\chi}\left(b\right)P^{j}\label{eq:s2}
\end{equation}
where $T_{j}\delta_{k}=\delta_{k-j}$ and $P=$ the $p\times p$ permutation
matrix, see (\ref{eq:perm}).
\end{enumerate}
Details below. 
\begin{lem}
\label{lem:pcom}Set $\left(T_{j}\xi\right)_{k}=\xi_{k+j}$, for $k,j\in\mathbb{Z}$,
$\xi\in l^{2}$; equivalently, $T_{j}\delta_{k}=\delta_{k-1}$. Then 
\begin{enumerate}
\item The following identity holds: 
\begin{equation}
T_{n}U^{\chi}=U^{\widehat{\alpha}^{n}\chi}T_{n},\quad\forall n\in\mathbb{Z},\forall\chi\in K.\label{eq:p4}
\end{equation}

\item For $\omega=\left(\omega_{i}\right)\in l^{\infty}\left(\mathbb{Z}\right)$,
set 
\begin{align}
\left(\pi\left(\omega\right)\xi\right)_{k} & =\omega_{k}\xi_{k},\quad\xi\in l^{2}\left(\mathbb{Z}\right);\;\mbox{or}\nonumber \\
\pi\left(\omega\right)\delta_{k} & =\omega_{k}\delta_{k}.\label{eq:14}
\end{align}
Then 
\begin{equation}
U_{\left(j,b\right)}^{\chi}\pi\left(\omega\right)\big(U_{\left(j,b\right)}^{\chi}\big)^{*}=\pi\left(T_{j}\omega\right)\label{eq:15}
\end{equation}
$\forall\left(j,b\right)\in G$, $\forall\chi\in K$.
\item Restriction to the representation:
\begin{equation}
U^{\chi}\Big|_{B}=\sum_{j\in\mathbb{Z}}^{\oplus}\left(\widehat{\alpha}^{j}\chi\right)\label{eq:16}
\end{equation}
Recall $G:=B\rtimes_{\alpha}\mathbb{Z}$ so the subgroup $B$ corresponds
to $j=0$ in $\left\{ \left(j,b\right)\:|\:j\in\mathbb{Z},b\in B\right\} $.
Note that $\widehat{\alpha}^{j}\chi$ is a one-dimensional representation
of $B$.
\end{enumerate}
\end{lem}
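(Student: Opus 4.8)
The plan is to carry out all three parts directly in the explicit model of $U^{\chi}$ on $l^{2}(\mathbb{Z})$ supplied by \eqref{eq:p1} (equivalently \eqref{eq:2.11}), so that each assertion collapses to a computation on the basis $\{\delta_{k}\}$. Two conventions should be pinned down first, as they are the only genuinely delicate point: the shift direction, $T_{j}\delta_{k}=\delta_{k-j}$ and $(T_{j}\xi)_{k}=\xi_{k+j}$ (so $T_{j}$ is the bilateral shift, unitary with $T_{j}^{*}=T_{-j}$), and the dual action, $\widehat{\alpha}\chi=\chi\circ\alpha$, so that $\widehat{\alpha}^{n}\chi=\chi\circ\alpha_{n}$. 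With these fixed, everything is index bookkeeping.

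For part (1), I would apply both sides of \eqref{eq:p4} to $\delta_{k}$. The left side gives $T_{n}U_{(j,b)}^{\chi}\delta_{k}=\chi(\alpha_{k-j}(b))\,\delta_{k-j-n}$, and the right side gives $U_{(j,b)}^{\widehat{\alpha}^{n}\chi}T_{n}\delta_{k}=(\widehat{\alpha}^{n}\chi)(\alpha_{k-n-j}(b))\,\delta_{k-n-j}$. The basis vectors coincide, and equality of coefficients reads $(\widehat{\alpha}^{n}\chi)(\alpha_{k-n-j}(b))=\chi(\alpha_{k-j}(b))$, which holds because $\widehat{\alpha}^{n}\chi=\chi\circ\alpha_{n}$ together with $\alpha_{n}\alpha_{k-n-j}=\alpha_{k-j}$.

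For part (2), the key is the factorization $U_{(j,b)}^{\chi}=D_{\chi}(b)\,T_{j}$ from \eqref{eq:s1}, where $D_{\chi}(b)$ is the diagonal unitary $D_{\chi}(b)\delta_{m}=\chi(\alpha_{m}(b))\delta_{m}$. I would first verify the purely shift-theoretic identity $T_{j}\pi(\omega)T_{j}^{*}=\pi(T_{j}\omega)$ by evaluating on $\delta_{k}$: $T_{j}\pi(\omega)T_{-j}\delta_{k}=\omega_{k+j}\delta_{k}=(T_{j}\omega)_{k}\delta_{k}$. Conjugating further by $D_{\chi}(b)$ then costs nothing, since $D_{\chi}(b)$ and $\pi(T_{j}\omega)$ are both diagonal (hence commute) and $D_{\chi}(b)$ is unitary; thus $U_{(j,b)}^{\chi}\pi(\omega)(U_{(j,b)}^{\chi})^{*}=D_{\chi}(b)\pi(T_{j}\omega)D_{\chi}(b)^{*}=\pi(T_{j}\omega)$, which is \eqref{eq:15}. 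This is exactly the covariance \eqref{eq:id6}, exhibiting $(U^{\chi},\pi)$ as a system of imprimitivity.

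Part (3) is then immediate: restricting to the subgroup $B$ amounts to setting $j=0$, and $U_{(0,b)}^{\chi}\delta_{k}=\chi(\alpha_{k}(b))\delta_{k}$ shows each line $\mathbb{C}\delta_{k}$ is invariant under $U^{\chi}|_{B}$, on which it acts by the scalar $\chi(\alpha_{k}(b))=(\widehat{\alpha}^{k}\chi)(b)$. Summing over the orthogonal decomposition $l^{2}(\mathbb{Z})=\bigoplus_{k}\mathbb{C}\delta_{k}$ yields \eqref{eq:16}. I do not expect a serious obstacle in any of the three parts; the only real risk is a sign error in the shift or dual-action conventions, which is precisely why I would fix them at the outset and keep the index arithmetic explicit throughout.
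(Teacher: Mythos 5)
Your proposal is correct and takes essentially the same route as the paper: a direct index computation in the explicit $l^{2}(\mathbb{Z})$ model of $U^{\chi}$, where the paper verifies part (1) on arbitrary sequences using \eqref{eq:2.11} and declares parts (2) and (3) immediate. Your write-up merely fills in those ``immediate'' steps via the factorization $U_{\left(j,b\right)}^{\chi}=D_{\chi}\left(b\right)T_{j}$ of \eqref{eq:s1} and the commuting of diagonal operators, and correctly fixes the statement's typo to $T_{j}\delta_{k}=\delta_{k-j}$.
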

\begin{proof}
One checks that 
\[
\left(T_{n}U_{\left(j,b\right)}^{\chi}\xi\right)_{k}=\left(U_{\left(j,b\right)}^{\chi}\xi\right)_{k+n}=\chi\left(\alpha_{k+n}\left(b\right)\right)\xi_{k+n+j},\;\mbox{and}
\]
\[
\left(U_{\left(j,b\right)}^{\widehat{\alpha}^{n}\chi}T_{n}\xi\right)_{k}=\left(\widehat{\alpha}^{n}\chi\right)\left(\alpha_{k}\left(b\right)\right)\left(T_{n}\xi\right)_{k+j}=\chi\left(\alpha_{k+n}\left(b\right)\right)\xi_{k+n+j}.
\]
The other assertions are immediate. \end{proof}
\begin{cor}
If $\widehat{\alpha}^{p}\chi=\chi$, then $T_{p}$ commutes with $U^{\chi}$.
In this case, 
\begin{equation}
U_{\left(j,b\right)}^{\chi}\delta_{k}=\widehat{\alpha}^{k-j}\left(\chi\right)\left(b\right)\delta_{k-j}\label{eq:p2}
\end{equation}
induces an action on $l^{2}\left(\mathbb{Z}/p\mathbb{Z}\right)$.
If $m$ is fixed, the same representation is repeated, where the same
action occurs in the sub-band 
\begin{equation}
l^{2}\left(\left\{ \delta_{j+mp}\:|\:0\leq j<p\right\} \right).\label{eq:p3}
\end{equation}
\end{cor}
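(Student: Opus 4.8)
The plan is to read off all three assertions from Lemma \ref{lem:pcom} together with the hypothesis $\widehat{\alpha}^{p}\chi=\chi$. First, for the commutation claim I would simply specialize the intertwining identity \eqref{eq:p4} to $n=p$: since $\widehat{\alpha}^{p}\chi=\chi$, the right-hand side $U^{\widehat{\alpha}^{p}\chi}T_{p}$ collapses to $U^{\chi}T_{p}$, giving $T_{p}U^{\chi}=U^{\chi}T_{p}$ for all of $G$. This is the only place the hypothesis enters the first statement, and it is immediate.

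Next, to obtain \eqref{eq:p2} I would record the convention for the dual action that is already implicit in the computation proving Lemma \ref{lem:pcom}, namely $\left(\widehat{\alpha}^{n}\chi\right)\left(b\right)=\chi\left(\alpha_{n}\left(b\right)\right)$ for all $n\in\mathbb{Z}$ and $b\in B$. Applying this with $n=k-j$ turns the scalar coefficient $\chi\left(\alpha_{k-j}\left(b\right)\right)$ in \eqref{eq:p1} into $\left(\widehat{\alpha}^{k-j}\chi\right)\left(b\right)$, which is exactly \eqref{eq:p2}; so this step is a pure rewriting with no extra content.

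The substantive part is the passage to $l^{2}\left(\mathbb{Z}/p\mathbb{Z}\right)$. Here I would exploit that, under $\widehat{\alpha}^{p}\chi=\chi$, the coefficient $\left(\widehat{\alpha}^{k-j}\chi\right)\left(b\right)$ depends on the index only through its residue modulo $p$; equivalently, the diagonal factor $D_{\chi}\left(b\right)$ in the decomposition $U_{\left(j,b\right)}^{\chi}=D_{\chi}\left(b\right)T_{j}$ has the $p$-periodic entries $\left(\widehat{\alpha}^{k}\chi\right)\left(b\right)$. I would then split $l^{2}\left(\mathbb{Z}\right)$ into the period blocks \eqref{eq:p3}, writing $k=j+mp$ with $0\le j<p$ and $m\in\mathbb{Z}$; since the entries of $D_{\chi}\left(b\right)$ repeat verbatim from one block to the next, the diagonal action of $B$ is literally the same operator on each sub-band, while the shift reduces modulo $p$ to the cyclic permutation $P$. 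Using the commutation of $T_{p}$ from the first step, this packages the whole family $U_{\left(j,b\right)}^{\chi}$ into the $p$-dimensional representation $U_{\left(j,b\right)}^{\chi}=D_{\chi}\left(b\right)P^{j}$ of \eqref{eq:s2} on $l^{2}\left(\mathbb{Z}/p\mathbb{Z}\right)$, which is the asserted induced action, and the identical diagonal data on every block is precisely the statement that the representation is repeated across the sub-bands.

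The step I expect to be the main obstacle is this last one: being careful about what ``induces an action on $l^{2}\left(\mathbb{Z}/p\mathbb{Z}\right)$'' means, since the sub-bands \eqref{eq:p3} are not themselves $U^{\chi}$-invariant (a shift $T_{j}$ with $j\not\equiv0\pmod p$ moves a vector out of its block). The honest formulation is that $T_{p}$ generates a commuting copy of $\mathbb{Z}$, and the quotient $\mathbb{Z}\twoheadrightarrow\mathbb{Z}/p\mathbb{Z}$ together with the $p$-periodicity of the coefficients lets $U^{\chi}$ descend to the finite-dimensional representation \eqref{eq:s2}. Making this descent precise, rather than literally restricting to a non-invariant sub-band, is the one place where I would be careful to phrase the claim correctly.
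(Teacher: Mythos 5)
Your proposal is correct and matches the paper's approach: the paper's entire proof is ``Immediate from Lemma \ref{lem:pcom}'', and your argument is exactly the intended unpacking --- specializing \eqref{eq:p4} to $n=p$, rewriting \eqref{eq:p1} as \eqref{eq:p2} via the dual-action convention $(\widehat{\alpha}^{n}\chi)(b)=\chi(\alpha_{n}(b))$, and using $p$-periodicity of the diagonal coefficients to descend to $l^{2}(\mathbb{Z}/p\mathbb{Z})$. Your added caveat that the sub-bands \eqref{eq:p3} are not themselves invariant, so the finite-dimensional representation arises by descent through $\mathbb{Z}\twoheadrightarrow\mathbb{Z}/p\mathbb{Z}$ rather than literal restriction, is a careful reading of what the corollary actually asserts and goes slightly beyond what the paper spells out.
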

\begin{proof}
Immediate from Lemma \ref{lem:pcom}.
\end{proof}
The $p$-dimensional representation in each band $\left\{ \delta_{j+mp}\:|\:0\leq j<p\right\} $
may be given in matrix form:
\begin{lem}
\label{lem:DP}Let $\chi\in K$, and assume the orbit of $\chi$ under
the action of $\widehat{\alpha}$ has finite order $p$, i.e., $\widehat{\alpha}^{p}\chi=\chi$
and $\widehat{\alpha}^{k}\chi\neq\chi$, $1\leq k<p$. For $b\in B$,
set 
\[
D_{\chi}\left(b\right)=\begin{pmatrix}\chi\left(b\right)\\
 & \chi\left(\alpha\left(b\right)\right) &  & \smash{\scalebox{3}{\textup{0}}}\\
 &  & \ddots\\
\smash{\scalebox{3}{\textup{0}}} &  &  & \chi\left(\alpha^{p-1}\left(b\right)\right)
\end{pmatrix}=diag\left(\chi\left(\alpha^{i}\left(b\right)\right)\right)_{0}^{p-1}
\]
a $p\times p$ diagonal matrix. If $\chi=\left(\chi_{1},\ldots,\chi_{n}\right)$,
set $D_{\chi}\left(b\right)=diag\left(\chi_{i}\left(b\right)\right)_{n\times n}$. 

For the permutation matrix $P$, we shall use its usual $p\times p$
matrix representation
\begin{equation}
P=\begin{pmatrix}0 & 1 & 0 & \cdots & 0\\
0 & 0 & 1 & \cdots & 0\\
\vdots & \vdots & 0 & \cdots & \vdots\\
\vdots & \vdots & \vdots & \cdots & \vdots\\
\vdots & \vdots & \vdots &  & 0\\
0 & 0 & 0 & \cdots & 1\\
1 & 0 & 0 & \cdots & 0
\end{pmatrix}_{p\times p}.\label{eq:perm}
\end{equation}
We have that
\begin{enumerate}
\item The following identity holds:
\begin{equation}
PD_{\chi}\left(b\right)=D_{\chi}\left(\alpha\left(b\right)\right)P\label{eq:2.18}
\end{equation}

\item For all $\left(j,b\right)\in G=B\rtimes_{\alpha}\mathbb{Z}$, let
\begin{equation}
U_{p}^{\left(\chi\right)}\left(j,b\right):=D_{\chi}\left(b\right)P^{j},\label{eq:2.19}
\end{equation}
then $U_{p}^{\chi}\in Rep\left(G,\mathbb{C}^{p}\right)$, i.e., 
\[
U_{p}^{\chi}\left(j,b\right)U_{p}^{\chi}\left(j',b'\right)=U_{p}^{\chi}\left(j+j',\alpha_{j}\left(b'\right)+b\right)
\]
for all $\left(j,b\right),\left(j',b'\right)$ in $G$. 
\end{enumerate}
\end{lem}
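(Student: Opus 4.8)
The plan is to treat both parts as direct matrix computations, using $0$-indexing for the rows and columns of the $p\times p$ matrices so that the $i$-th diagonal entry of $D_{\chi}\left(b\right)$ is exactly $\chi\left(\alpha^{i}\left(b\right)\right)$, matching the definition. First I would record the one fact about the dual action that the computation needs: from the proof of Lemma \ref{lem:pcom} one reads off $(\widehat{\alpha}\chi)(c)=\chi(\alpha(c))$, so the hypothesis $\widehat{\alpha}^{p}\chi=\chi$ is precisely the statement $\chi(\alpha^{p}(b))=\chi(b)$ for all $b\in B$.

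For part (1), I would compute the two sides entry-by-entry. With $0$-indexing, $P_{i,j}=1$ exactly when $j\equiv i+1\pmod p$, so $(PD_{\chi}(b))_{i,j}=P_{i,j}\,\chi(\alpha^{j}(b))$ is nonzero only for $j\equiv i+1$, with value $\chi(\alpha^{i+1}(b))$ for $0\le i\le p-2$ and $\chi(\alpha^{0}(b))=\chi(b)$ for $i=p-1$. On the other side, $(D_{\chi}(\alpha(b))P)_{i,j}=\chi(\alpha^{i+1}(b))\,P_{i,j}$ is nonzero only for $j\equiv i+1$, with value $\chi(\alpha^{i+1}(b))$; for $i=p-1$ this reads $\chi(\alpha^{p}(b))$. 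The two matrices agree off the wrap-around, and they agree at $i=p-1$ precisely because $\chi(\alpha^{p}(b))=\chi(b)$, i.e.\ because $\widehat{\alpha}^{p}\chi=\chi$. This establishes \eqref{eq:2.18}.

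For part (2), I would first iterate \eqref{eq:2.18} by induction to obtain $P^{j}D_{\chi}(b)=D_{\chi}(\alpha^{j}(b))P^{j}$ for every $j\in\mathbb{Z}$ (the case $j<0$ following since $P$ is invertible with $P^{-1}=P^{\ast}$). Then I would expand
\[
U_{p}^{\chi}(j,b)\,U_{p}^{\chi}(j',b')=D_{\chi}(b)\,P^{j}\,D_{\chi}(b')\,P^{j'}=D_{\chi}(b)\,D_{\chi}(\alpha^{j}(b'))\,P^{j+j'},
\]
pushing $P^{j}$ past $D_{\chi}(b')$ by the iterated relation and using $P^{j}P^{j'}=P^{j+j'}$. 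The product of the two diagonal matrices is again diagonal, with $i$-th entry $\chi(\alpha^{i}(b))\,\chi(\alpha^{i}(\alpha^{j}(b')))=\chi(\alpha^{i}(b+\alpha^{j}(b')))$, where I use that each $\alpha^{i}$ is an automorphism of $B$ together with the character property \eqref{eq:2.12}. Hence $D_{\chi}(b)D_{\chi}(\alpha^{j}(b'))=D_{\chi}(b+\alpha^{j}(b'))$, and the product equals $D_{\chi}(\alpha_{j}(b')+b)\,P^{j+j'}=U_{p}^{\chi}(j+j',\alpha_{j}(b')+b)$, which by the multiplication rule \eqref{eq:p2.5-1} in $G$ is exactly $U_{p}^{\chi}$ evaluated at the group product $(j,b)(j',b')$.

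The computation is routine; the only points requiring care are the direction of the dual action (so that the conjugation in part (1) shifts the diagonal by $\alpha$ rather than $\alpha^{-1}$) and the periodicity $\chi\circ\alpha^{p}=\chi$, which is what makes the single wrap-around entry in part (1) consistent and, equivalently, what makes $P^{j}$ depend only on $j\bmod p$ in harmony with the $p$-periodic structure of the diagonal. I do not expect a deeper obstacle, since unitarity and nondegeneracy are not asserted here, only the homomorphism property.
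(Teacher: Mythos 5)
Your proof is correct and follows essentially the same route as the paper: verify the commutation relation \eqref{eq:2.18} by direct matrix computation (the paper only illustrates this with $p=3$, whereas you do the general entry-by-entry check, correctly isolating the wrap-around entry as the one place where $\widehat{\alpha}^{p}\chi=\chi$ is needed), then iterate \eqref{eq:2.18} to push $P^{j}$ past $D_{\chi}(b')$ and obtain the homomorphism property, which the paper simply declares straightforward. Your write-up is a faithful, fully detailed version of the paper's sketch, with no gaps.
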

\begin{proof}
It suffices to verify (\ref{eq:2.18}), and the rest of the lemma
is straightforward. Set $\chi_{k}:=\chi\left(\alpha^{k}\left(b\right)\right)$,
$0\leq k<p$, and 
\begin{equation}
D_{\chi}\left(b\right)=diag\left(\chi_{0},\chi_{1},\cdots,\chi_{p-1}\right).\label{eq:2.21}
\end{equation}
The assertion in (\ref{eq:2.18}) follows from a direct calculation.
We illustrate this with $p=3$, see Example \ref{exa:p=00003D3} below.\end{proof}
\begin{example}
\label{exa:p=00003D3}For $p=3$, (\ref{eq:2.18}) reads:
\begin{align*}
PD_{\chi}\left(b\right) & =\begin{pmatrix}0 & 1 & 0\\
0 & 0 & 1\\
1 & 0 & 0
\end{pmatrix}\begin{pmatrix}\chi_{0} & 0 & 0\\
0 & \chi_{1} & 0\\
0 & 0 & \chi_{2}
\end{pmatrix}=\begin{pmatrix}0 & \chi_{1} & 0\\
0 & 0 & \chi_{2}\\
\chi_{0} & 0 & 0
\end{pmatrix}\\
D_{\chi}\left(\alpha\left(b\right)\right)P & =\begin{pmatrix}\chi_{1} & 0 & 0\\
0 & \chi_{2} & 0\\
0 & 0 & \chi_{0}
\end{pmatrix}\begin{pmatrix}0 & 1 & 0\\
0 & 0 & 1\\
1 & 0 & 0
\end{pmatrix}=\begin{pmatrix}0 & \chi_{1} & 0\\
0 & 0 & \chi_{2}\\
\chi_{0} & 0 & 0
\end{pmatrix}
\end{align*}
\end{example}
\begin{lem}
\label{lem:irrA}Let $\chi\in K$, assume the orbit of $\chi$ under
the action of $\widehat{\alpha}$ has finite order $p$. Let $U_{p}^{\chi}$
be the corresponding representation. Then $U_{p}^{\chi}\in Rep_{irr}\left(G,l^{2}\left(\mathbb{Z}/p\mathbb{Z}\right)\right)$,
i.e., $U_{p}^{\chi}$ is irreducible.\end{lem}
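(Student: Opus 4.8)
The plan is to invoke Schur's lemma: since $U_{p}^{\chi}$ is a finite-dimensional unitary representation on $\mathbb{C}^{p}\cong l^{2}\left(\mathbb{Z}/p\mathbb{Z}\right)$, it is irreducible precisely when its commutant is trivial, i.e., when every $W\in\mathscr{B}\left(\mathbb{C}^{p}\right)$ satisfying $WU_{p}^{\chi}\left(j,b\right)=U_{p}^{\chi}\left(j,b\right)W$ for all $\left(j,b\right)\in G$ is a scalar multiple of the identity. So I fix such a $W$ and aim to show $W=cI$.

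First I would exploit the two generating pieces of the representation separately, using the factorization $U_{p}^{\chi}\left(j,b\right)=D_{\chi}\left(b\right)P^{j}$ from Lemma \ref{lem:DP}. Setting $j=0$ shows that $W$ commutes with the diagonal matrix $D_{\chi}\left(b\right)=\mathrm{diag}\left(\chi\left(\alpha^{i}\left(b\right)\right)\right)_{i=0}^{p-1}$ for every $b\in B$. Writing this out entrywise gives $W_{ii'}\big(\chi\left(\alpha^{i'}\left(b\right)\right)-\chi\left(\alpha^{i}\left(b\right)\right)\big)=0$ for all $b\in B$ and all indices $i,i'$.

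The key step is then to show that the $p$ characters $b\mapsto\chi\left(\alpha^{i}\left(b\right)\right)$, $0\le i<p$, are pairwise distinct as elements of $K=\widehat{B}$. This is exactly where the hypothesis on the orbit enters: since $\chi\circ\alpha^{i}=\widehat{\alpha}^{-i}\chi$ (dual action; the opposite sign convention changes nothing), an equality $\chi\circ\alpha^{i}=\chi\circ\alpha^{i'}$ with $0\le i<i'<p$ would give $\widehat{\alpha}^{i'-i}\chi=\chi$ with $0<i'-i<p$, contradicting the assumption that the orbit of $\chi$ under $\widehat{\alpha}$ has order exactly $p$. Hence for each off-diagonal pair $i\neq i'$ there exists $b\in B$ with $\chi\left(\alpha^{i'}\left(b\right)\right)\neq\chi\left(\alpha^{i}\left(b\right)\right)$, which forces $W_{ii'}=0$; thus $W$ is diagonal, say $W=\mathrm{diag}\left(w_{0},\dots,w_{p-1}\right)$. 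Finally I would use the other generator: setting $b=0$ gives $U_{p}^{\chi}\left(1,0\right)=D_{\chi}\left(0\right)P=P$, so $W$ must commute with the cyclic permutation matrix $P$ of (\ref{eq:perm}). Since conjugation by $P$ cyclically permutes the diagonal entries of a diagonal matrix, $WP=PW$ forces $w_{0}=w_{1}=\cdots=w_{p-1}$, i.e., $W=w_{0}I$, and irreducibility follows.

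The main obstacle is the pairwise-distinctness claim, which is the genuine content linking the matrix picture to the hypothesis that $p$ is the exact period of $\chi$; everything else is routine linear algebra. I would stress that handling the off-diagonal entries one pair at a time avoids needing a single $b_{0}\in B$ that simultaneously separates all $p$ characters (which could fail when $B$ is finite), so no extra hypothesis on $B$ is required.
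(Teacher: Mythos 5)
Your proof is correct and follows essentially the same route as the paper's: a direct Schur-type computation of the commutant of $\{D_{\chi}(b)P^{j}\}$, with the exact period $p$ entering precisely through the pairwise distinctness of the characters $\chi,\widehat{\alpha}\chi,\dots,\widehat{\alpha}^{p-1}\chi$. The only (cosmetic) difference is the order of the two steps: the paper first uses commutation with $P$ to force a circulant form and then kills the off-diagonal constants via $D_{\chi}(b)$, whereas you first use $D_{\chi}(b)$ to force diagonality and then $P$ to equalize the diagonal entries.
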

\begin{proof}
Recall $U_{p}^{\chi}\left(j,b\right)=D_{\chi}\left(b\right)P^{j}$,
$\forall\left(j,b\right)\in G$; see (\ref{eq:2.19}). 

Let $A:l^{2}\left(\mathbb{Z}_{p}\right)\rightarrow l^{2}\left(\mathbb{Z}_{p}\right)$
be in the commutant of $U_{p}^{\chi}$, so $A$ commutes with $P$.
It follows that $A$ is a Toeplitz matrix
\begin{equation}
A=\begin{pmatrix}A_{0} & A_{2} & \ddots & \ddots & A_{p-1}\\
A_{-1} & A_{0} & A_{2} & \ddots & \ddots\\
\ddots & A_{-1} & A_{0} & \ddots & \ddots\\
\ddots & \ddots & \ddots & \ddots & \ddots\\
A_{-p+1} & \ddots & \ddots & \ddots & A_{0}
\end{pmatrix}\label{eq:2.22}
\end{equation}
relative to the ONB $\left\{ \delta_{0},\delta_{1},\cdots,\delta_{p-1}\right\} $,
where $A_{i,j}:=\left\langle \delta_{i},A\delta_{j}\right\rangle _{l^{2}}$.
Note that 
\begin{eqnarray*}
A_{i,j} & = & \left\langle \delta_{i},A\delta_{j}\right\rangle =\left\langle \delta_{i},AP\delta_{j+1}\right\rangle =\left\langle \delta_{i},PA\delta_{j+1}\right\rangle \\
 & = & \left\langle P^{*}\delta_{i},A\delta_{j+1}\right\rangle =\left\langle \delta_{i+1},A\delta_{j+1}\right\rangle =A_{i+1,j+1}
\end{eqnarray*}
where $i+1$, $j+1$ are the additions in $\mathbb{Z}_{p}=\mathbb{Z}/p\mathbb{Z}$,
i.e., $+\mod\:p$. 

If $A$ also commutes with $D$, then 
\begin{equation}
A_{k}\left(\chi\left(b\right)-\chi\left(\alpha^{k}\left(b\right)\right)\right)=0\label{eq:2.23}
\end{equation}
for all $k=1,2,\cdots,p-1$, and all $b\in B$. But since $\chi,\widehat{\alpha}\chi,\cdots,\widehat{\alpha}^{p-1}\chi$
are distinct, $A_{k}=0$, $\forall k\neq0$, and so $A=A_{0}I_{p\times p}$,
$A_{0}\in\mathbb{C}$. \end{proof}
\begin{prop}
\label{prop:decomp}The orbit of $\chi\in K$ under the action of
$\widehat{\alpha}$ has finite order $p$ if and only if 
\[
Ind_{B}^{G}\left(\chi\right)=\sum^{\oplus}\left(p\mbox{-dimensional irreducibles}\right)
\]
\end{prop}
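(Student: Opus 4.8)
The plan is to prove both implications, with the dimension of the irreducible constituents serving as the invariant that separates the two cases. For the forward implication, assume the $\widehat{\alpha}$-orbit of $\chi$ has order $p$, so $\widehat{\alpha}^{p}\chi=\chi$. By the corollary following Lemma \ref{lem:pcom} the bilateral shift $T_{p}$ (with $T_{p}\delta_{k}=\delta_{k-p}$) lies in the commutant of $U^{\chi}$; equivalently the diagonal field $D_{\chi}(b)$ is $p$-periodic, $\chi(\alpha_{k+p}(b))=\chi(\alpha_{k}(b))$, so that $U^{\chi}_{(j,b)}=D_{\chi}(b)T_{j}$ is assembled from $p$-periodic data. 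Since $T_{p}$ is a bilateral shift by $p$, it has purely continuous spectrum filling $\mathbb{T}$ with uniform multiplicity $p$; diagonalizing it turns the decomposition into a \emph{direct integral} rather than an orthogonal direct sum. Indeed the full shift group $\{T_{j}\}_{j\in\mathbb{Z}}$ has no nonzero finite-dimensional invariant subspace, so $l^{2}(\mathbb{Z})$ carries no $p$-dimensional $U^{\chi}$-invariant subspace, and the symbol ``$\sum^{\oplus}$'' in the statement must be read as a direct integral of $p$-dimensional irreducibles.

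Concretely I would introduce the Zak-type unitary $W:l^{2}(\mathbb{Z})\to L^{2}(\mathbb{T},\mathbb{C}^{p})$ defined by $W\delta_{pm+r}=z^{m}\epsilon_{r}$, where $k=pm+r$ with $0\le r<p$, $m\in\mathbb{Z}$, and $\{\epsilon_{r}\}_{r=0}^{p-1}$ is the standard basis of $\mathbb{C}^{p}$. Under $W$ the central shift $T_{p}$ becomes multiplication by $z^{-1}$, so $U^{\chi}$ is decomposable: $WU^{\chi}W^{*}=\int_{\mathbb{T}}^{\oplus}U^{\chi}_{z}\,d\mu(z)$ for normalized Haar measure $\mu$ on $\mathbb{T}$. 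A direct computation using the $p$-periodicity of $D_{\chi}$ gives the fiber representation on $\mathbb{C}^{p}$ as $U^{\chi}_{z}(j,b)=D_{\chi}(b)P_{z}^{\,j}$, where $P_{z}$ is the cyclic permutation $P$ of (\ref{eq:perm}) with its wrap-around (lower-left) entry multiplied by the phase $z^{-1}$; in particular $U^{\chi}_{z}(0,b)=D_{\chi}(b)$ exactly as in Lemma \ref{lem:DP}. Each fiber acts on the $p$-dimensional space $\mathbb{C}^{p}$. I would then verify that each $U^{\chi}_{z}$ is irreducible by reusing the argument of Lemma \ref{lem:irrA}: an operator commuting with every $D_{\chi}(b)$ must be diagonal, since $\chi,\widehat{\alpha}\chi,\dots,\widehat{\alpha}^{p-1}\chi$ are distinct (the orbit has order exactly $p$), and a diagonal matrix commuting with the weighted cyclic shift $P_{z}$ (all of whose nonzero entries persist) is forced to be scalar. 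Hence every fiber is a $p$-dimensional irreducible, and $U^{\chi}\cong\int_{\mathbb{T}}^{\oplus}U^{\chi}_{z}\,d\mu(z)$ exhibits $Ind_{B}^{G}(\chi)$ as a (direct integral) sum of $p$-dimensional irreducibles.

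For the converse I would argue by an exhaustive dichotomy on the orbit of $\chi$. If the orbit is infinite, then by item (1) at the head of Section \ref{sec:irr} the representation $U^{\chi}$ is itself irreducible and infinite-dimensional, hence cannot equal a sum of finite ($p$-)dimensional irreducibles; so the orbit must be finite, of some order $q$. The forward implication then already decomposes $U^{\chi}$ into $q$-dimensional irreducibles, and it remains to see $q=p$. For this I would use the $B$-spectrum as an invariant: by Lemma \ref{lem:pcom}(3) one has $U^{\chi}\big|_{B}=\bigoplus_{j\in\mathbb{Z}}\widehat{\alpha}^{j}\chi$, so the set of characters of $B$ occurring in $U^{\chi}$ is precisely the orbit of $\chi$, of cardinality $q$. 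On the other hand each $p$-dimensional irreducible constituent restricts on the normal subgroup $B$ to a single $\widehat{\alpha}$-orbit of characters (Clifford theory for $B\triangleleft G$), and that common orbit is contained in, hence equals, the orbit of $\chi$; comparing cardinalities forces $q=p$.

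The main obstacle is the middle step: setting up the Zak transform and checking that $U^{\chi}$ is decomposable with the stated \emph{twisted} fibers $D_{\chi}(b)P_{z}^{\,j}$, together with the point — easy to overlook — that the decomposition is genuinely a direct integral (the irreducibles occur with ``continuous multiplicity''), so that no honest $p$-dimensional invariant subspace of $l^{2}(\mathbb{Z})$ exists and the corollary's ``sub-band'' picture is only a heuristic for the fibers. The secondary delicate point is the converse's appeal to the invariance of the constituent dimension, which I would pin down through the $B$-restriction in Lemma \ref{lem:pcom}(3) rather than through abstract uniqueness of direct-integral decompositions.
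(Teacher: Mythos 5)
Your proposal is correct in substance, but it takes a genuinely different route from the paper: the paper's entire proof of Proposition \ref{prop:decomp} is a one-sentence appeal to Mackey's imprimitivity theorem (Theorem \ref{thm:s1-4}, in the form given in \cite{Ors79}), with no decomposition actually exhibited, whereas you construct the decomposition explicitly. Your Zak-type unitary $W\delta_{pm+r}=z^{m}\epsilon_{r}$, the identification of $T_{p}$ with multiplication by $z^{-1}$, and the resulting fibers $U_{z}^{\chi}(j,b)=D_{\chi}(b)P_{z}^{j}$ (the matrix $P$ of (\ref{eq:perm}) with twisted wrap-around entry) are all correct, and your fiberwise irreducibility argument is a sound adaptation of Lemma \ref{lem:irrA}. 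What your approach buys is considerable: your insistence that $\sum^{\oplus}$ must mean a \emph{direct integral} is not pedantry but is forced by the paper's own subsequent theorems --- the commutant $\{f(z^{p}):f\in L^{\infty}(\mathbb{T})\}$ of (\ref{eq:m1}) has no minimal projections, and $L_{G}\left(U_{p}^{\chi},Ind_{B}^{G}(\chi)\right)=0$ --- so $Ind_{B}^{G}(\chi)$ has no finite-dimensional invariant subspace whatsoever and a literal orthogonal sum of $p$-dimensional irreducibles cannot exist; your fibered picture is the precise statement the proposition needs, and it simultaneously explains the commutant formula (decomposable scalars $L^{\infty}(\mathbb{T})\otimes I$ pull back under $W$ to functions of $T_{p}$, i.e., multiplication by $f(z^{p})$). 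What the paper's citation buys, by contrast, is brevity and placement of the result inside the general Mackey machine, at the cost of leaving the direct-sum/direct-integral discrepancy with its own later results unresolved. The one soft spot in your argument is the last step of the converse: Clifford theory for $B\triangleleft G$ gives that a $p$-dimensional irreducible restricts to $B$ as $m$ copies of a single orbit of size $q$, so $p=mq$, and ``comparing cardinalities'' only yields $q\leq p$. To force $m=1$ you need one more observation, e.g., that the $\chi$-isotypic subspace $H_{0}$ of such an irreducible $\pi$ is invariant under the unitary $\pi(q,0)$, and any operator on $H_{0}$ commuting with $\pi(q,0)\vert_{H_{0}}$ extends to an element of the commutant of $\pi$, so irreducibility forces $\dim H_{0}=1$. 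This is routine but should be said; with it, your proof is complete and substantially more informative than the paper's.
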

\begin{proof}
This follows from an application of the general theory; more specifically
from an application of Mackey's imprimitivity theorem, in the form
given to it in \cite{Ors79}; see also Theorem \ref{thm:s1-4} above.\end{proof}
\begin{lem}
$Ind_{B}^{G}\left(\chi\right)$ is irreducible iff the orbit of $\chi$
under the action of $\widehat{\alpha}$ has finite order $p$, i.e.,
iff the set $\left\{ \widehat{\alpha}^{k}\chi\:|\:k\in\mathbb{Z}\right\} $
consists of distinct points.\end{lem}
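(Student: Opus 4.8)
The plan is to determine the commutant $\{U^\chi\}'$ explicitly and to use that $U^\chi$ is irreducible iff $\{U^\chi\}'=\mathbb{C}I$. Since the elements $(0,b)$, $b\in B$, together with $(1,0)$ generate $G=B\rtimes_\alpha\mathbb{Z}$, and $U^\chi_{(0,b)}=D_\chi(b)$, $U^\chi_{(1,0)}=T_1$ by \eqref{eq:s1}, an operator $A\in\mathscr{B}(l^2(\mathbb{Z}))$ commutes with $U^\chi$ iff it commutes with every diagonal $D_\chi(b)$ and with the shift $T_1$. Writing $A_{k,k'}=\langle\delta_k,A\delta_{k'}\rangle$ and recalling from \eqref{eq:16} that $D_\chi(b)\delta_k=(\widehat\alpha^k\chi)(b)\,\delta_k$, the commutation relation $AD_\chi(b)=D_\chi(b)A$ reads
\[
A_{k,k'}\big((\widehat\alpha^{k'}\chi)(b)-(\widehat\alpha^{k}\chi)(b)\big)=0,\qquad\forall b\in B,\ \forall k,k'\in\mathbb{Z}.
\]

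First I would prove the implication $\Leftarrow$, namely that distinctness of the points $\widehat\alpha^k\chi$ forces irreducibility. If $k\neq k'$ then $\widehat\alpha^k\chi\neq\widehat\alpha^{k'}\chi$ in $K$, so some $b$ separates them and the displayed identity gives $A_{k,k'}=0$; hence $A$ is diagonal, $A=\pi(\omega)$ with $\omega\in l^\infty(\mathbb{Z})$ in the notation of \eqref{eq:14}. The remaining constraint $T_1A=AT_1$, combined with the covariance $T_1\pi(\omega)T_1^{*}=\pi(T_1\omega)$ (the case $(j,b)=(1,0)$ of \eqref{eq:15}), forces $T_1\omega=\omega$; thus $\omega$ is shift-invariant, hence constant, and $A\in\mathbb{C}I$.

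For the implication $\Rightarrow$ I would argue by contraposition. If the points $\widehat\alpha^k\chi$ are not all distinct, the orbit is finite with some minimal period $p\geq 1$, $\widehat\alpha^p\chi=\chi$. By the Corollary following Lemma~\ref{lem:pcom} the shift $T_p$ then commutes with $U^\chi$, and since $T_p$ is not a scalar the commutant is strictly larger than $\mathbb{C}I$, so $U^\chi$ is reducible. Equivalently, Proposition~\ref{prop:decomp} realizes $U^\chi$ as an orthogonal sum over the bands \eqref{eq:p3} of copies of the $p$-dimensional irreducible $U_p^\chi$ of Lemma~\ref{lem:irrA}, which is patently not irreducible. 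Together the two implications yield the claimed equivalence.

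The only step requiring genuine computation is the reduction of $\{D_\chi(b):b\in B\}'$ to the diagonal algebra in the distinct-orbit case, and this is also the conceptual heart of the matter: distinctness of the characters $\widehat\alpha^k\chi$ is exactly what separates every pair of basis vectors. I expect the point to watch to be that a bounded operator commuting with all the $D_\chi(b)$ is genuinely diagonal (not merely block-diagonal), which is where the hypothesis that the orbit consists of distinct points is used in full strength; once $A=\pi(\omega)$ is established, the passage to scalars via the shift $T_1$ is routine.
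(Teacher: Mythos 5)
Your proposal is correct and is essentially the paper's own argument: both directions match (the reducible direction via $T_p$ in the commutant when $\widehat{\alpha}^{p}\chi=\chi$, the irreducible direction by computing the commutant against the two generating families $\left\{ D_{\chi}\left(b\right)\right\} _{b\in B}$ and the shift, with distinctness of the characters $\widehat{\alpha}^{k}\chi$ used to separate basis vectors by a suitable $b$). The only difference is the order of the two steps --- the paper first uses commutation with all $T_{j}$ to put $A$ in Toeplitz/convolution form and then kills the off-diagonal coefficients $\eta_{t}$, $t\neq0$, via the diagonals, whereas you first diagonalize $A$ via $\left\{ D_{\chi}\left(b\right)\right\} '$ and then use $T_{1}$-covariance to force the diagonal to be constant --- a harmless reversal of the same computation.
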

\begin{proof}
We showed that if $\chi$ has finite order $p$, then the translation
$T_{p}$ operator commutes with $U^{\chi}:=Ind_{B}^{G}\left(\chi\right)$,
where $T_{p}\delta_{k}=\delta_{k-p}$, $k\in\mathbb{Z}$. Hence assume
$\chi$ has infinite order. 

Consider the cases $U^{\chi}\left(j,b\right)$ of 
\begin{enumerate}
\item $b=0$. $U^{\chi}\left(j,0\right)=T_{j}$, $j\in\mathbb{Z}$
\item $j=0$. $U^{\chi}\left(0,b\right)=D_{\chi}\left(b\right)$, $\delta_{k}\longmapsto\chi\left(\alpha_{k}\left(b\right)\right)\delta_{k}$,
where $\chi\left(\alpha_{k}\left(b\right)\right)=\left(\widehat{\alpha}^{k}\chi\right)\left(b\right)$,
and $D_{\chi}=\left(\widehat{\alpha}^{k}\chi\right)=$ diagonal matrix. 
\end{enumerate}

Thus if $A\in\mathscr{B}\left(l^{2}\left(\mathbb{Z}\right)\right)$
is in the commutant of $U^{\chi}$, then
\[
\left(A\xi\right)_{k}=\sum_{j\in\mathbb{Z}}\eta_{k-j}\xi_{j},\quad\eta\in l^{\infty}.
\]
But 
\begin{equation}
\left(\chi\left(\alpha_{s}\left(b\right)\right)-\chi\left(\alpha_{k}\left(b\right)\right)\right)\eta_{k-s}=0\label{eq:2.31}
\end{equation}
$\forall b$, $\forall k,s$, so $\eta_{t}=0$ if $t\in\mathbb{Z}\backslash\left\{ 0\right\} $
and $A=\eta_{0}I$. 

Note from (\ref{eq:2.31}) that if $k-s\neq0$ then $\exists b$ s.t.
\[
\chi\left(\alpha_{s}\left(b\right)\right)-\chi\left(\alpha_{k}\left(b\right)\right)\neq0
\]
since $\widehat{\alpha}^{s}\chi\neq\widehat{\alpha}^{k}\chi$. (Compare
with (\ref{eq:2.23}) in Lemma \ref{lem:irrA}.)\end{proof}
\begin{thm}
\label{thm:irrep}Let $Ind_{B}^{G}\left(\chi\right)$ be the induced
representation in (\ref{eq:ir1})-(\ref{eq:ir2}).
\begin{enumerate}
\item $Ind_{B}^{G}\left(\chi\right)$ is irreducible iff $\chi$ has no
finite periods.
\item Suppose the orbit of $\chi$ under the action of $\widehat{\alpha}$
has finite order $p$, i.e., $\widehat{\alpha}^{p}\chi=\chi$, and
$\widehat{\alpha}^{k}\chi\neq\chi$, $1\leq k<p$. Then the commutant
is as follows:
\begin{equation}
M_{p}:=\left\{ Ind_{B}^{G}\left(\chi\right)\right\} '\cong\left\{ f\left(z^{p}\right)\;\big|\;f\in L^{\infty}\left(\mathbb{T}\right)\right\} \label{eq:m1}
\end{equation}
where $\cong$ in (\ref{eq:m1}) denotes unitary equivalence.
\end{enumerate}
\end{thm}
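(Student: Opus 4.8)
The plan is to dispatch part (1) at once and to reserve the real work for the commutant computation in part (2). For (1), irreducibility of $Ind_{B}^{G}(\chi)$ is the same as triviality of its commutant, and this is exactly the content of the lemma preceding the theorem: when the orbit $\{\widehat{\alpha}^{k}\chi:k\in\mathbb{Z}\}$ is infinite the commutant reduces to the scalars, via the entrywise argument culminating in \eqref{eq:2.31}; whereas if $\widehat{\alpha}^{p}\chi=\chi$ for some $p\ge 1$, the Corollary following Lemma \ref{lem:pcom} exhibits the non-scalar operator $T_{p}$ in the commutant. Thus $Ind_{B}^{G}(\chi)$ is irreducible precisely when $\chi$ has no finite period, and (1) needs no further argument.

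For (2), assume $p$ is the least period of $\chi$ and pass to the Fourier transform $\mathcal{F}\colon l^{2}(\mathbb{Z})\to L^{2}(\mathbb{T})$, $\delta_{k}\mapsto z^{k}$, under which the shift $U^{\chi}_{(1,0)}=T_{1}$ becomes multiplication by $z^{-1}$. Since $(1,0)$ together with the elements $(0,b)$, $b\in B$, generate $G$ through the product rule \eqref{eq:p2.5-1} (indeed $(j,b)=(0,b)(1,0)^{j}$), an operator lies in $M_{p}=\{Ind_{B}^{G}(\chi)\}'$ iff it commutes with $T_{1}$ and with every diagonal $D_{\chi}(b)$, where $D_{\chi}(b)\delta_{k}=(\widehat{\alpha}^{k}\chi)(b)\,\delta_{k}$. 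The commutant of the bilateral shift is the maximal abelian algebra of multiplication operators, so commuting with $T_{1}$ means $A=\mathcal{F}^{-1}M_{g}\mathcal{F}$ for a unique $g\in L^{\infty}(\mathbb{T})$; in particular the matrix of $A$ is Laurent, $A_{k,l}=a_{k-l}$ with $a_{n}=\widehat{g}(n)$.

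It remains to impose commutation with the $D_{\chi}(b)$. Writing $AD_{\chi}(b)=D_{\chi}(b)A$ in matrix entries gives, just as in \eqref{eq:2.31}, the relation $a_{k-l}\big((\widehat{\alpha}^{l}\chi)(b)-(\widehat{\alpha}^{k}\chi)(b)\big)=0$ for all $k,l\in\mathbb{Z}$ and $b\in B$. Because $p$ is the least period, $\widehat{\alpha}^{k}\chi=\widehat{\alpha}^{l}\chi$ exactly when $p\mid(k-l)$, so $a_{n}=0$ unless $p\mid n$; hence $g(z)=\sum_{m}a_{mp}z^{mp}=f(z^{p})$ with $f(w):=\sum_{m}a_{mp}w^{m}$. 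Conversely, for any $f\in L^{\infty}(\mathbb{T})$ the multiplier $f(z^{p})$ has Fourier support in $p\mathbb{Z}$ and therefore commutes with $T_{1}$ and with all $D_{\chi}(b)$, so it lies in $M_{p}$. Since $z\mapsto z^{p}$ is surjective on $\mathbb{T}$ and carries normalized Haar measure to normalized Haar measure, $\|f(z^{p})\|_{L^{\infty}(\mathbb{T})}=\|f\|_{L^{\infty}(\mathbb{T})}$, so $f\mapsto f(z^{p})$ is an isometric $*$-isomorphism onto its range; thus $\mathcal{F}$ implements the unitary equivalence of $M_{p}$ with the multiplication algebra $\{M_{f(z^{p})}:f\in L^{\infty}(\mathbb{T})\}\cong L^{\infty}(\mathbb{T})$, which is \eqref{eq:m1}.

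The main obstacle is the boundedness/measurability bookkeeping rather than the algebra. One must be sure that commuting with $T_{1}$ yields a genuine $L^{\infty}$ multiplier, not merely a formal Laurent series, and this is precisely where the identification of $\{T_{1}\}'$ with the maximal abelian multiplication algebra carries the weight; one must likewise justify the norm equality $\|f(z^{p})\|_{L^{\infty}(\mathbb{T})}=\|f\|_{L^{\infty}(\mathbb{T})}$, which is what transfers boundedness from $g=f(z^{p})$ back to $f$ and guarantees $f\in L^{\infty}(\mathbb{T})$. Once these two points are secured, the support condition $p\mid n$ and the substitution $w=z^{p}$ complete the identification.
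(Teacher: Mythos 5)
Your proposal is correct, and for part (2) it supplies an argument that the paper itself never writes down: Theorem \ref{thm:irrep} appears there with no proof environment at all, part (1) being covered by the lemma immediately preceding it (the same lemma you cite), while the commutant formula \eqref{eq:m1} is only asserted and then invoked in the proof of the theorem that follows. Your route for (2) --- identify $\{T_{1}\}'$ with the multiplication algebra of $L^{\infty}(\mathbb{T})$ via the Fourier transform, then impose commutation with the diagonals $D_{\chi}(b)$ entrywise to force the Fourier support of the multiplier into $p\mathbb{Z}$ --- is the natural completion of the paper's own method: it is exactly the computation of \eqref{eq:2.31} run in the finite-period case, where the conclusion becomes $a_{n}=0$ for $p\nmid n$ rather than for all $n\neq0$. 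The paper's implicit alternative is Proposition \ref{prop:decomp}, obtained from Mackey's imprimitivity theorem, which decomposes $Ind_{B}^{G}(\chi)$ into $p$-dimensional irreducibles; but that statement alone does not pin down the commutant --- read as a countable direct sum of copies of the single irreducible $U_{p}^{\chi}$ (which the sub-band Corollary seems to suggest, even though those sub-bands are not actually invariant under $T_{j}$ for $p\nmid j$), it would give the non-abelian commutant $I_{p}\otimes\mathscr{B}\left(l^{2}(\mathbb{Z})\right)$. Your computation shows the decomposition is in fact a multiplicity-free direct integral (fibers indexed by $w=z^{p}\in\mathbb{T}$), which is precisely what makes $M_{p}$ abelian and equal to $\left\{ f\left(z^{p}\right)\right\} $. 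You are also more careful than the paper on the analytic bookkeeping: the preceding lemma there writes a commuting operator as convolution by some $\eta\in l^{\infty}$, which is not the correct boundedness condition, whereas your appeal to the maximal-abelian property of the multiplication algebra, together with the measure-preservation of $z\mapsto z^{p}$ to transfer $f\left(z^{p}\right)\in L^{\infty}$ back to $f\in L^{\infty}$, closes both of these gaps.
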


\begin{thm}
Let $G=B\rtimes_{\alpha}\mathbb{Z}$, $K=\widehat{B}$, $\widehat{\alpha}\in Aut\left(K\right)$.
Assume the orbit of $\chi$ under the action of $\widehat{\alpha}$
has finite order $p$, i.e., $\widehat{\alpha}^{p}\chi=\chi$, $\widehat{\alpha}^{k}\chi\neq\chi$,
for all $1\leq k<p$. Then the representation $Ind_{B}^{G}\left(\chi\right)$
has abelian commutant 
\[
M_{p}=\left\{ Ind_{B}^{G}\left(\chi\right)\right\} '
\]
and $M_{p}$ does not contain minimal projections.\end{thm}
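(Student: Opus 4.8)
The plan is to read off both conclusions from the explicit description of the commutant already recorded in Theorem~\ref{thm:irrep}(2), namely $M_p=\{Ind_{B}^{G}(\chi)\}'\cong\{f(z^p)\:|\:f\in L^\infty(\mathbb{T})\}$. It is worth recalling briefly why this description holds, since that is what makes the two claims transparent. As in the proof of the last lemma, an operator $A$ in the commutant must commute with every shift $T_j=U^\chi_{(j,0)}$ and with every diagonal operator $D_\chi(b)=U^\chi_{(0,b)}$. Commuting with the bilateral shift forces $A$ to be a Laurent operator, $A_{k,l}=\eta_{k-l}$ with $\eta\in l^\infty$, while commuting with all $D_\chi(b)$ forces $\eta_t=0$ unless $\widehat{\alpha}^t\chi=\chi$, i.e.\ unless $p\mid t$. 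Hence $M_p$ is exactly the von Neumann algebra generated by $T_p$, which under the Fourier isomorphism $l^2(\mathbb{Z})\cong L^2(\mathbb{T})$ becomes multiplication by $z^p$; this is the algebra $\{f(z^p)\:|\:f\in L^\infty(\mathbb{T})\}$.

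Both conclusions now follow from the identification $M_p\cong L^\infty(\mathbb{T})$. Abelianness is immediate, since $M_p$ consists of multiplication operators, which pairwise commute. For the absence of minimal projections I would show that $M_p$ is \emph{diffuse}. The map $\mathbb{T}\to\mathbb{T}$, $z\mapsto z^p$, is a surjective continuous homomorphism, so by uniqueness of Haar measure it pushes normalized Haar measure $m$ forward to $m$; consequently $f\mapsto f(z^p)$ is an isometric, normal $*$-isomorphism of $L^\infty(\mathbb{T},m)$ onto $M_p$, and we may regard $M_p=L^\infty(\mathbb{T},m)$ as abstract von Neumann algebras. A projection in $L^\infty(\mathbb{T},m)$ is multiplication by an indicator $\mathbf{1}_E$ of a measurable set $E\subset\mathbb{T}$, nonzero precisely when $m(E)>0$, and minimal exactly when $E$ is an atom of $m$. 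Since Haar measure on $\mathbb{T}$ is non-atomic, every $E$ with $m(E)>0$ contains a measurable $E'$ with $0<m(E')<m(E)$, and $\mathbf{1}_{E'}$ is then a nonzero projection in $M_p$ strictly below $\mathbf{1}_E$. Thus $M_p$ has no minimal projections.

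The only place calling for a little care — and the nearest thing to an obstacle — is the passage from the set-level description $\{f(z^p)\}$ to a genuine von Neumann algebra isomorphism with $L^\infty(\mathbb{T})$: one must verify that $z\mapsto z^p$ is measure-preserving so that $f\mapsto f(z^p)$ is isometric and normal, after which the notion of a minimal projection of $M_p$ translates precisely into that of an atom of $m$. Once this dictionary is in place, the non-atomicity of Haar measure on $\mathbb{T}$ closes the argument, and no genuinely hard step remains.
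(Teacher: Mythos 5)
Your proof is correct and takes essentially the same route as the paper's: both read abelianness off the identification $M_{p}\cong\left\{ f\left(z^{p}\right)\,|\,f\in L^{\infty}\left(\mathbb{T}\right)\right\}$ of (\ref{eq:m1}), and both deduce the absence of minimal projections from the fact that projections in $L^{\infty}$ are indicators $\chi_{E}\left(z^{p}\right)$ together with the non-atomicity of Haar (Lebesgue) measure on $\mathbb{T}$. Your write-up merely supplies details the paper leaves implicit --- the Laurent-operator derivation of the commutant and the verification that $z\mapsto z^{p}$ is measure-preserving, so that $f\mapsto f\left(z^{p}\right)$ is a normal isomorphism onto $M_{p}$ --- which strengthens rather than changes the argument.
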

\begin{proof}
Follows from the fact that $M_{p}$ is $L^{\infty}\left(\mbox{Lebesgue}\right)$
as a von Neumann algebra, and this implies the conclusion. 

Details: Recall that $l^{2}\left(\mathbb{Z}\right)\simeq L^{2}\left(\mathbb{T}\right)$,
where 
\begin{align}
l^{2}\ni\xi & \longmapsto f_{\xi}\left(z\right)=\sum_{j\in\mathbb{Z}}\xi_{j}z^{j}\in L^{2}\left(\mathbb{T}\right)\label{eq:2.16}\\
T\xi & \longmapsto z^{-j}f_{\xi}\left(z\right),\quad z\in\mathbb{T}.\label{eq:2.17}
\end{align}
Note that $Ind_{B}^{G}\left(\chi\right)$ may be realized on $l^{2}\left(\mathbb{Z}\right)$
or equivalently on $L^{2}\left(\mathbb{T}\right)$ via (\ref{eq:2.16}),
where 
\[
\left\Vert f_{\xi}\right\Vert _{L^{2}}^{2}=\int_{\mathbb{T}}\left|f_{\xi}\right|^{2}=\sum_{k\in\mathbb{Z}l}\left|\xi_{k}\right|^{2}.
\]
On $l^{2}\left(\mathbb{Z}\right)$, we have 
\[
Ind_{B}^{G}\left(\chi\right)_{\left(j,b\right)}=D_{\chi}\left(b\right)T_{j};
\]
See (\ref{eq:s1}), and Lemma \ref{lem:DP}. 

And on $L^{2}\left(\mathbb{T}\right)$, we have 
\[
Ind_{B}^{G}\left(\chi\right)_{\left(j,b\right)}=\widehat{D}_{\chi}\left(b\right)\widehat{T}_{j},\;\mbox{where}
\]
$\widehat{D}_{\chi}\left(b\right)$ denotes rotation on $\mathbb{T}$,
extended to the solenoid; and $\widehat{T}_{j}=$ multiplication by
$z^{-j}$ acting on $L^{2}\left(\mathbb{T}\right)$. 

By (\ref{eq:m1}), $M_{p}$ is abelian and has no minimal projections.
Note projections in $L^{\infty}$ are given by $P_{E}=$ multiplication
by $\chi_{E}\left(z^{p}\right)$, where $E$ is measurable in $\mathbb{T}$. 
\end{proof}

\section{Super-representations}

By \textquotedblleft super-representation\textquotedblright{} we will
refer here to a realization of noncommutative relations \textquotedblleft inside\textquotedblright{}
certain unitary representations of suitable groups acting in enlargement
Hilbert spaces; this is in the sense of \textquotedblleft dilation\textquotedblright{}
theory \cite{FrKa02}, but now in a wider context than is traditional.
Our present use of \textquotedblleft super-representations\textquotedblright{}
is closer to that of \cite{MR2154344,MR2391800,MR2563094,MR2830593}. 
\begin{defn}
\label{def:s4-1}Let $G$ be a locally compact group, $B$ a given
subgroup of $G$, and let $\mathscr{H}_{0}$ be a Hilbert space. Let
$U_{0}:G\rightarrow\mathscr{B}\left(\mathscr{H}_{0}\right)$ be a
positive definite operator-valued mapping, i.e., for all finite systems
$\left\{ c_{j}\right\} _{j=1}^{n}\subset\mathbb{C}$, $\left\{ g_{j}\right\} _{j=1}^{n}\subset G$,
we have 
\[
\sum_{j}\sum_{k}\overline{c}_{j}c_{k}U_{0}(g_{j}^{-1}g_{k})\geq0
\]
in the usual ordering of Hermitian operators. 

If there is a Hilbert space $\mathscr{H}$, an isometry $V:\mathscr{H}_{0}\rightarrow\mathscr{H}$,
and a unitary representation $U:G\rightarrow\left(\mbox{unitary operators on \ensuremath{\mathscr{H}}}\right)$
such that
\begin{enumerate}
\item $U$ is induced from a unitary representation of $B$, and 
\item $U_{0}\left(g\right)=V^{*}U\left(g\right)V$, $g\in G$;
\end{enumerate}

then we say that $U$ is a \emph{super-representation.}

\end{defn}
Let $B$ be discrete, abelian as before, $K=\widehat{B}$, $\alpha\in Aut\left(B\right)$,
$\widehat{\alpha}\in Aut\left(K\right)$, and $G=B\rtimes_{\alpha}\mathbb{Z}$.
Set 

$Rep\left(G\right)$ - unitary representations; and

$Rep_{irr}\left(G\right)$ - irreducible representations in $Rep\left(G\right)$

For $\chi\in K$, let $O\left(\chi\right)$ be the orbit of $\chi$,
i.e., 
\begin{equation}
O\left(\chi\right)=\left\{ \widehat{\alpha}^{j}\left(\chi\right)\:|\:j\in\mathbb{Z}\right\} \label{eq:18}
\end{equation}

Given $U\in Rep\left(G\right)$, let $Class\left(U\right)=$ the equivalent
class of all unitary representations, equivalent to $U$; i.e.,
\begin{equation}
\begin{split}Class\left(U\right) & =\big\{ V\in Rep\left(G\right)\:\big|\:V\simeq U\big\}\\
 & =\big\{ V\in Rep\left(G\right)\:|\:\exists W,\:\mbox{unitary}\:\mbox{s.t.}\\
 & \qquad WV_{g}=U_{g}W,\;g\in G\big\}
\end{split}
\label{eq:20}
\end{equation}

For $U_{1},U_{2}\in Rep\left(G\right)$, set 
\begin{align*}
L\big(U^{\left(1\right)},U^{\left(2\right)}\big)= & \Big\{ W:\mathscr{H}\left(U_{1}\right)\rightarrow\mathscr{H}\left(U_{2}\right)\:|\:W\;\mbox{bounded}\:\mbox{s.t.}\:\\
 & \quad WU_{g}^{\left(1\right)}=U_{g}^{\left(2\right)}W,\;g\in G\Big\}
\end{align*}

\begin{thm}
The mapping $\left\{ \mbox{set of all orbits }O\left(\chi\right)\right\} \longrightarrow Class\left(Rep\left(G\right)\right)$
\[
K\ni\chi\longmapsto U^{\chi}:=Ind_{B}^{G}\left(\chi\right)\in Rep\left(G\right)
\]
passes to 
\begin{equation}
O\left(\chi\right)\longmapsto Class\left(U^{\chi}\right).\label{eq:21}
\end{equation}
\end{thm}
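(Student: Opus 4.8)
The plan is to prove that the assignment $\chi\longmapsto Class\left(U^{\chi}\right)$ is constant on each $\widehat{\alpha}$-orbit; granting this, the induced map $O\left(\chi\right)\longmapsto Class\left(U^{\chi}\right)$ is automatically single-valued, which is exactly the assertion that the mapping \emph{passes to} the orbit space. First I would unwind the definition (\ref{eq:18}): two characters $\chi,\chi'\in K$ lie in the same orbit $O\left(\chi\right)$ if and only if $\chi'=\widehat{\alpha}^{n}\chi$ for some $n\in\mathbb{Z}$. It therefore suffices to establish that $U^{\widehat{\alpha}^{n}\chi}\cong U^{\chi}$ for every $n\in\mathbb{Z}$ and every $\chi\in K$.

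The key step is the intertwining identity of Lemma \ref{lem:pcom}(1), $T_{n}U^{\chi}=U^{\widehat{\alpha}^{n}\chi}T_{n}$. The operator $T_{n}$, defined by $\left(T_{n}\xi\right)_{k}=\xi_{k+n}$, i.e. $T_{n}\delta_{k}=\delta_{k-n}$, is the bilateral shift on $l^{2}\left(\mathbb{Z}\right)$, hence a unitary with $T_{n}^{-1}=T_{-n}$. Rearranging the identity yields $U^{\widehat{\alpha}^{n}\chi}=T_{n}U^{\chi}T_{n}^{-1}$, so $T_{n}$ implements a unitary equivalence $U^{\chi}\cong U^{\widehat{\alpha}^{n}\chi}$ and $Class\left(U^{\chi}\right)=Class\left(U^{\widehat{\alpha}^{n}\chi}\right)$. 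Since every member of $O\left(\chi\right)$ is of the form $\widehat{\alpha}^{n}\chi$, the class depends only on the orbit, and the map descends as claimed.

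I do not expect a genuine obstacle: the substantive content is already packaged in Lemma \ref{lem:pcom}(1), and what remains is only the routine observation that $T_{n}$ is unitary on $l^{2}\left(\mathbb{Z}\right)$. The one point deserving a word of care is the direction of the conjugation, namely that $T_{n}$ carries $U^{\chi}$ to $U^{\widehat{\alpha}^{n}\chi}$ rather than to its inverse; this is settled directly by the displayed formula. Should one wish to upgrade the statement to injectivity, that distinct orbits give inequivalent classes, the argument would instead rest on the restriction formula $U^{\chi}\big|_{B}=\sum_{j\in\mathbb{Z}}^{\oplus}\widehat{\alpha}^{j}\chi$ of Lemma \ref{lem:pcom}(3): the family of characters appearing in $U^{\chi}\big|_{B}$ is precisely $O\left(\chi\right)$ and is a unitary invariant, so unitarily equivalent induced representations must arise from a single orbit.
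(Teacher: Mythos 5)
Your proposal is correct and follows essentially the same route as the paper's own (sketched) proof: both rest on the intertwining identity $T_{n}U^{\chi}=U^{\widehat{\alpha}^{n}\chi}T_{n}$ from Lemma \ref{lem:pcom}, together with the unitarity of the shift $T_{n}$, to conclude that $Class\left(U^{\chi}\right)$ depends only on the orbit $O\left(\chi\right)$. Your write-up is in fact more complete than the paper's sketch, and the closing remark on injectivity via the restriction formula (\ref{eq:16}) is a correct, if optional, addition.
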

\begin{proof}
(Sketch) By Lemma \ref{lem:pcom} eq. (\ref{eq:p4}), if $\chi\in K$,
$j\in\mathbb{Z}$, then $T_{j}U^{\chi}T_{j}^{*}=U^{\widehat{\alpha}^{j}\chi}$
, and so $Class\left(U^{\chi}\right)$ depends only on $O\left(\chi\right)$
and not on the chosen point in $O\left(\chi\right)$. 

Using (\ref{eq:14})-(\ref{eq:16}), we can show that $U^{\chi}$
is irreducible, but if $O\left(\chi\right)$ is finite then we must
pass to the quotient $\mathbb{Z}_{p}:=\mathbb{Z}/p\mathbb{Z}$ and
realize $U^{\chi}$ on $l^{2}\left(\mathbb{Z}_{p}\right)$, as a finite-dimensional
representation.\end{proof}
\begin{thm}
There is a natural isomorphism:
\begin{align*}
L_{G}\left(U^{\chi_{1}},U^{\chi_{2}}\right) & \cong L_{B}\left(\chi_{2},U^{\chi_{1}}\big|_{B}\right)\\
 & =\sum_{j\in\mathbb{Z}}L_{B}\left(\chi_{2}\:\big|\:\widehat{\alpha}^{j}\left(\chi_{1}\right)\right)=\#\left\{ j\:\big|\:\chi_{2}=\widehat{\alpha}^{j}\left(\chi_{1}\right)\right\} .
\end{align*}
\end{thm}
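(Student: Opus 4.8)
The plan is to read the displayed chain of identifications from left to right: the first symbol $\cong$ is an instance of Frobenius reciprocity, while the two subsequent equalities form a bookkeeping computation with one-dimensional characters of the abelian group $B$. For the first step I would invoke the restriction--induction adjunction: since $U^{\chi_{2}}=Ind_{B}^{G}(\chi_{2})$ is induced, every bounded $G$-intertwiner $W\colon\mathscr{H}(U^{\chi_{1}})\to\mathscr{H}(U^{\chi_{2}})$ corresponds naturally to a $B$-intertwiner relating $\chi_{2}$ and the restriction $U^{\chi_{1}}\big|_{B}$. This is the form of Mackey's reciprocity underlying Theorem~\ref{thm:s1-4} (see also \cite{Mac88,Ors79,Jor88}); note that here one wants the \emph{full} intertwiner space, not the imprimitivity-compatible one, so the relevant statement is the plain induction--restriction adjunction. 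Up to passing to adjoints $w\mapsto w^{*}$, the two orderings $L_{B}(\chi_{2},U^{\chi_{1}}|_{B})$ and $L_{B}(U^{\chi_{1}}|_{B},\chi_{2})$ carry the same dimension, which is all the counting on the right-hand side requires.

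For the remaining two equalities I would use the restriction formula \eqref{eq:16}, namely $U^{\chi_{1}}\big|_{B}=\sum_{j\in\mathbb{Z}}^{\oplus}\widehat{\alpha}^{\,j}(\chi_{1})$. Because the source $\chi_{2}$ is a single one-dimensional character, a $B$-intertwiner into the direct sum splits coordinatewise, giving $L_{B}(\chi_{2},U^{\chi_{1}}|_{B})\cong\bigoplus_{j\in\mathbb{Z}}L_{B}(\chi_{2},\widehat{\alpha}^{\,j}\chi_{1})$: concretely such an intertwiner is determined by $v=\sum_{j}v_{j}\delta_{j}\in l^{2}(\mathbb{Z})$, and the intertwining relation forces $\chi_{2}(b)v_{j}=(\widehat{\alpha}^{\,j}\chi_{1})(b)v_{j}$ for all $b$, so $v_{j}=0$ unless $\chi_{2}=\widehat{\alpha}^{\,j}\chi_{1}$. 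Schur's lemma for characters of the abelian group $B$ then gives $L_{B}(\chi_{2},\widehat{\alpha}^{\,j}\chi_{1})=\mathbb{C}$ when $\chi_{2}=\widehat{\alpha}^{\,j}\chi_{1}$ and $\{0\}$ otherwise, and summing yields the count $\#\{\,j\mid\chi_{2}=\widehat{\alpha}^{\,j}\chi_{1}\}$. I would also record the self-contained matrix version, which mirrors the irreducibility arguments above: writing $U^{\chi}_{(j,b)}=D_{\chi}(b)T_{j}$, the choice $b=0$ forces $W$ to commute with every shift $T_{j}$, hence $W$ is Toeplitz with $W_{k,l}:=\langle\delta_{k},W\delta_{l}\rangle=\eta_{k-l}$; the choice $j=0$ then forces $W_{k,l}\bigl((\widehat{\alpha}^{\,l}\chi_{1})(b)-(\widehat{\alpha}^{\,k}\chi_{2})(b)\bigr)=0$ for all $b$ (compare \eqref{eq:2.31}), so $\eta_{k-l}$ vanishes unless $\chi_{2}=\widehat{\alpha}^{\,l-k}\chi_{1}$, and the surviving free parameters are indexed exactly by $\{\,j\mid\chi_{2}=\widehat{\alpha}^{\,j}\chi_{1}\}$.

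The step I expect to demand the most care is the interpretation of the final ``$=$'' when the orbit $O(\chi_{1})$ is finite. If $\widehat{\alpha}^{\,p}\chi_{1}=\chi_{1}$, then whenever $\chi_{2}=\widehat{\alpha}^{\,j_{0}}\chi_{1}$ for one $j_{0}$ the entire coset $j_{0}+p\mathbb{Z}$ qualifies, so the index set is infinite and the intertwiner space is infinite-dimensional; here the equalities must be read as equalities of Hilbert-space dimension (multiplicity), and one should verify that the admissible $\eta$ are precisely the bounded convolution symbols supported on that arithmetic progression --- consistent with the commutant computation $M_{p}\cong\{f(z^{p})\mid f\in L^{\infty}(\mathbb{T})\}$ of Theorem~\ref{thm:irrep}. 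In the generic infinite-orbit case the set $\{\,j\mid\chi_{2}=\widehat{\alpha}^{\,j}\chi_{1}\}$ has at most one element, so $L_{G}(U^{\chi_{1}},U^{\chi_{2}})$ is either $\{0\}$ (inequivalent orbits) or $\mathbb{C}$ (whence $U^{\chi_{1}}\cong U^{\chi_{2}}$), recovering the irreducibility half of Theorem~\ref{thm:irrep}.
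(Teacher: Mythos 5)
Your proposal is correct, and its opening step---Frobenius reciprocity in the form of the induction--restriction adjunction---is the entirety of the paper's own proof, which reads, in full, ``Follows from Fr\"obenius reciprocity.'' What you add beyond the paper is genuinely useful, though. The coordinatewise splitting of $L_{B}\left(\chi_{2},U^{\chi_{1}}\big|_{B}\right)$ via \eqref{eq:16} and Schur's lemma for characters is routine bookkeeping, but your self-contained Toeplitz computation (with $b=0$ forcing $W$ to commute with all shifts $T_{j}$, and $j=0$ forcing the symbol's Fourier support into $\left\{ j\:|\:\chi_{2}=\widehat{\alpha}^{j}\chi_{1}\right\} $, exactly parallel to \eqref{eq:2.31}) is a material improvement: the paper itself proves, two theorems later, that formal Fr\"obenius reciprocity \emph{fails} for $G=B\rtimes_{\alpha}\mathbb{Z}$ (namely $L_{G}\left(U_{p}^{\chi},Ind_{B}^{G}\left(\chi\right)\right)=0$ even though $\dim L_{B}\left(\chi,U_{p}^{\chi}\big|_{B}\right)=1$), so a bare appeal to a general adjunction is delicate for this group, and your direct matrix argument establishes the stated count independently of any such general principle. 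Your treatment of the finite-orbit case is also a refinement the paper omits: there both sides are infinite, the left-hand side being the bounded symbols supported on an arithmetic progression $j_{0}+p\mathbb{Z}$ (consistent with the commutant description \eqref{eq:m1} in Theorem \ref{thm:irrep}) while the right-hand side is an $\ell^{2}$-type space, so the displayed ``$\cong$'' and ``$=$'' must indeed be read as equality of multiplicities rather than as isomorphisms of normed spaces; flagging this is the honest reading of the theorem. In short: same headline mechanism as the paper, but your version actually closes the logical gap that the one-line citation leaves open.
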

\begin{proof}
Follows from Fröbenius reciprocity. 
\end{proof}
Another application of Fröbenius reciprocity: 
\begin{thm}
Let $\chi\in K$, and assume the orbit of $\chi$ under the action
of $\widehat{\alpha}$ has finite order $p$, i.e., $\widehat{\alpha}^{p}\chi=\chi$,
and $\widehat{\alpha}^{k}\chi\neq\chi$ if $1\leq k<p$. Let $U_{p}^{\chi}\in Rep\left(G,l^{2}\left(\mathbb{Z}_{p}\right)\right)$.
(Recall that $U_{p}^{\chi}$ is irreducible, $\dim U_{p}^{\chi}=p$.)
Then 
\[
L_{G}\left(U_{p}^{\chi},Ind_{B}^{G}\left(\chi\right)\right)\simeq L_{B}\left(\chi,U_{p}^{\chi}\big|_{B}\right)
\]
and 
\begin{equation}
\dim L_{B}\left(\chi,U_{p}^{\chi}\big|_{B}\right)=1.\label{eq:dim}
\end{equation}
\end{thm}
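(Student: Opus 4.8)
The plan is to obtain the first isomorphism as a direct instance of Fr\"obenius reciprocity, in the very form used in the preceding theorem, and then to evaluate $\dim L_B(\chi, U_p^\chi|_B)$ by Schur's lemma over the abelian group $B$.

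First I would observe that $Ind_B^G(\chi) = U^\chi$ is an induced representation by construction, so the reciprocity isomorphism $L_G(\sigma, Ind_B^G(\tau)) \simeq L_B(\tau, \sigma|_B)$, established through Theorem \ref{thm:s1-4} and applied in the preceding theorem, specializes with $\sigma = U_p^\chi$ and inducing character $\tau = \chi$ to give
\[
L_G\big(U_p^\chi,\, Ind_B^G(\chi)\big) \simeq L_B\big(\chi,\, U_p^\chi\big|_B\big).
\]
No finiteness of $U_p^\chi$ is required: reciprocity holds for an arbitrary representation in the non-induced slot, and $U_p^\chi$ (being $p$-dimensional) is certainly not itself induced from $B$.

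Next I would identify the restriction $U_p^\chi|_B$ explicitly. By Lemma \ref{lem:DP}, eq. (\ref{eq:2.19}), one has $U_p^\chi(0,b) = D_\chi(b) = diag\big(\chi(\alpha^i(b))\big)_{i=0}^{p-1}$, and since $\chi(\alpha^i(b)) = (\widehat{\alpha}^i\chi)(b)$ this displays $U_p^\chi|_B$ as a diagonal direct sum of one-dimensional characters,
\[
U_p^\chi\big|_B \;=\; \bigoplus_{i=0}^{p-1} \widehat{\alpha}^i\chi,
\]
the finite-orbit analogue of eq. (\ref{eq:16}). Because each summand is an irreducible (one-dimensional) representation of the abelian group $B$, Schur's lemma gives
\[
L_B\big(\chi,\, U_p^\chi\big|_B\big) \;=\; \bigoplus_{i=0}^{p-1} L_B\big(\chi,\, \widehat{\alpha}^i\chi\big),
\]
with $\dim L_B(\chi, \widehat{\alpha}^i\chi) = 1$ if $\widehat{\alpha}^i\chi = \chi$ and $0$ otherwise.

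Finally I would invoke the exact-order hypothesis to count the surviving summands. The assumption that the orbit has order exactly $p$ means that $\chi, \widehat{\alpha}\chi, \ldots, \widehat{\alpha}^{p-1}\chi$ are pairwise distinct, so among the indices $0 \le i < p$ only $i = 0$ yields $\widehat{\alpha}^i\chi = \chi$; equivalently, an intertwiner $w = (w_0, \ldots, w_{p-1}) : \mathbb{C} \to \mathbb{C}^p$ must satisfy $w_i\big(\chi(b) - \chi(\alpha^i(b))\big) = 0$ for all $b$, forcing $w_i = 0$ for $i \neq 0$ and leaving $w_0$ free --- precisely the mechanism of eq. (\ref{eq:2.23}) in Lemma \ref{lem:irrA}. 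Hence $\dim L_B(\chi, U_p^\chi|_B) = 1$, which is (\ref{eq:dim}). I do not anticipate a real obstacle; the only step needing genuine care is this last one, where the \emph{distinctness} of the orbit points --- i.e. the full hypothesis $\widehat{\alpha}^k\chi \neq \chi$ for $1 \le k < p$, not merely $\widehat{\alpha}^p\chi = \chi$ --- must be used to pin the multiplicity of $\chi$ at exactly one.
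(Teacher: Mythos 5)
Your second half --- the computation $\dim L_{B}\bigl(\chi,U_{p}^{\chi}\big|_{B}\bigr)=1$ --- is exactly the paper's argument: restrict $U_{p}^{\chi}\left(j,b\right)=D_{\chi}\left(b\right)P^{j}$ to $j=0$, read off
\[
U_{p}^{\chi}\big|_{B}=\sum_{0\leq k<p}^{\oplus}\widehat{\alpha}^{k}\chi,
\]
and use the distinctness of the $p$ characters in the orbit so that only the $k=0$ summand contributes, via the mechanism of (\ref{eq:2.23}). That part is fine and matches the paper.

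The genuine problem is your justification of the first isomorphism, specifically the claim that ``no finiteness of $U_{p}^{\chi}$ is required: reciprocity holds for an arbitrary representation in the non-induced slot.'' For \emph{unitary} representations and \emph{bounded} intertwiners this is false when $G$ is not compact, and it is false precisely in the situation at hand: the paper proves, in the theorem immediately following this one, that for $G=B\rtimes_{\alpha}\mathbb{Z}$ one has $L_{G}\bigl(U_{p}^{\chi},Ind_{B}^{G}\left(\chi\right)\bigr)=0$ whenever $\chi$ has finite order $p$. The mechanism is concrete: if $W:\mathbb{C}^{p}\rightarrow l^{2}\left(\mathbb{Z}\right)$ intertwines, then the vectors $\xi^{\left(k\right)}=Wu_{k}$ satisfy $\bigl|\xi_{s}^{\left(k\right)}\bigr|=\bigl|\xi_{s+pt}^{\left(k\right)}\bigr|$ for all $t\in\mathbb{Z}$, and an $l^{2}$-sequence with periodic modulus is zero; equivalently, there is no isometric copy of the finite-order permutation $P$ inside the bilateral shift. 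So if your unconditional reciprocity claim were correct, combining it with your (correct) count $\dim L_{B}=1$ would yield $0=1$. The isomorphism in the statement is the \emph{formal} Fr\"obenius reciprocity, which --- as the paper stresses right after --- holds only for suitable $G$, e.g., $G$ compact, and breaks down for this semidirect product. Your proof needs either to restrict to a setting where the adjunction is actually valid (as the paper's follow-up theorem does by assuming $G$ compact), or to present the isomorphism as the formal statement it is, rather than derive it from a general reciprocity principle claimed to hold for arbitrary unitary representations of arbitrary $G$; as written, that step is not a gap one can patch but an assertion the paper itself refutes.
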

\begin{proof}
Since $U_{p}^{\left(\chi\right)}\left(j,b\right):=D_{\chi}\left(b\right)P^{j}$
(see (\ref{eq:2.19})), where $P=$ the permutation matrix on $\mathbb{Z}_{p}$,
we get 
\[
U_{p}^{\chi}\big|_{B}=\sum_{0\leq k<p}^{\oplus}\widehat{\alpha}^{k}\chi;
\]
see (\ref{eq:16}). But notice that all the $p$ characters $\chi,\widehat{\alpha}\left(\chi\right),\cdots,\widehat{\alpha}^{p-1}\left(\chi\right)$
are distinct, so (\ref{eq:dim}) holds since $L_{B}\left(\chi,\widehat{\alpha}^{k}\left(\chi\right)\right)=0$
if $k\neq0\mod p$.
\end{proof}
We have also proved the following:
\begin{thm}
Let $\chi\in K$, and assume the orbit of $\chi$ under the action
of $\widehat{\alpha}$ has finite order $p$. Assume $G$ is compact.
Let $U_{p}^{\chi}\in Rep_{irr}\left(G,l^{2}\left(\mathbb{Z}_{p}\right)\right)$.
Then $U_{p}^{\chi}$ is contained in $Ind_{B}^{G}\left(\chi\right)$
precisely once, i.e., 
\[
\dim L_{G}\left(U_{p}^{\chi},Ind_{B}^{G}\left(\chi\right)\right)=1.
\]

\end{thm}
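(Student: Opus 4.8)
The plan is to obtain this as a direct consequence of Frobenius reciprocity, which now applies in its classical Peter--Weyl form precisely because $G$ is assumed compact: for a compact group and a closed subgroup, the multiplicity of an irreducible representation $U_p^\chi$ in an induced representation equals the multiplicity of the inducing character in the restriction $U_p^\chi|_B$. So the first step will be to record the reciprocity isomorphism
\[
L_G\left(U_p^\chi, Ind_B^G(\chi)\right) \cong L_B\left(\chi, U_p^\chi\big|_B\right),
\]
already displayed in the preceding theorem; the new point is simply that compactness of $G$ legitimizes this identity directly, without invoking the finer semidirect-product imprimitivity machinery.

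Next I would compute the restriction $U_p^\chi|_B$ explicitly. By Lemma \ref{lem:DP}, eq. (\ref{eq:2.19}), one has $U_p^\chi(j,b) = D_\chi(b) P^j$, so taking $j=0$ gives $U_p^\chi(0,b) = D_\chi(b) = diag\left(\chi(\alpha^k(b))\right)_{k=0}^{p-1}$. Since $\chi(\alpha^k(b)) = (\widehat{\alpha}^k\chi)(b)$, this diagonal restriction splits as the orthogonal direct sum $U_p^\chi|_B = \bigoplus_{k=0}^{p-1} \widehat{\alpha}^k\chi$, in agreement with (\ref{eq:16}) of Lemma \ref{lem:pcom}.

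The final step is the multiplicity count. By hypothesis the orbit of $\chi$ has exact order $p$, so the characters $\chi, \widehat{\alpha}\chi, \ldots, \widehat{\alpha}^{p-1}\chi$ are pairwise distinct. For one-dimensional representations, $L_B(\chi, \widehat{\alpha}^k\chi)$ is one-dimensional when $\widehat{\alpha}^k\chi = \chi$ and is zero otherwise; among the $p$ summands exactly one (the $k=0$ term) equals $\chi$. Hence $\dim L_B(\chi, U_p^\chi|_B) = 1$, and the reciprocity isomorphism then yields $\dim L_G(U_p^\chi, Ind_B^G(\chi)) = 1$.

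I expect the only delicate point --- and hence the main obstacle --- to be the justification of the reciprocity step under the compactness hypothesis rather than through the imprimitivity-theoretic route of the earlier sections; once that is secured, the remainder is exactly the orbit-distinctness count already carried out in the preceding theorem, so no further work is needed.
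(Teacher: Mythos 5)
Your proposal is correct and follows essentially the same route as the paper: the paper derives this theorem directly from its preceding reciprocity result, namely $L_{G}\left(U_{p}^{\chi},Ind_{B}^{G}\left(\chi\right)\right)\simeq L_{B}\left(\chi,U_{p}^{\chi}\big|_{B}\right)$ together with the decomposition $U_{p}^{\chi}\big|_{B}=\sum_{0\leq k<p}^{\oplus}\widehat{\alpha}^{k}\chi$ and the distinctness of the orbit characters, with compactness of $G$ invoked exactly as you do to legitimize the Frobenius reciprocity step. Your multiplicity count and your identification of the reciprocity step as the one point requiring the compactness hypothesis match the paper's reasoning, so nothing further is needed.
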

But this form of Fröbenius reciprocity only holds for certain groups
$G$, e.g., when $G$ is compact. Now for $G=B\rtimes_{\alpha}\mathbb{Z}$,
the formal Fröbenius reciprocity breaks down, and in fact: 
\begin{thm}
$L_{G}\left(U_{p}^{\chi},Ind_{B}^{G}\left(\chi\right)\right)=0$ if
$\chi\in K$ is an element of finite order $p$. \end{thm}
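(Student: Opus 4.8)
The plan is to argue by contradiction, combining Schur's lemma for the finite-dimensional irreducible $U_{p}^{\chi}$ with the explicit structure of the commutant $M_{p}$ established in Theorem \ref{thm:irrep}. Suppose $W\in L_{G}\left(U_{p}^{\chi},Ind_{B}^{G}\left(\chi\right)\right)$ is nonzero, so $W:\mathbb{C}^{p}\rightarrow l^{2}\left(\mathbb{Z}\right)$ satisfies $WU_{p}^{\chi}\left(g\right)=Ind_{B}^{G}\left(\chi\right)\left(g\right)W$ for all $g\in G$. Taking adjoints and using unitarity, $W^{*}$ intertwines in the opposite direction; hence $W^{*}W$ commutes with $U_{p}^{\chi}$ and $WW^{*}$ commutes with $Ind_{B}^{G}\left(\chi\right)$. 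Since $U_{p}^{\chi}$ is irreducible (Lemma \ref{lem:irrA}), Schur's lemma forces $W^{*}W=\lambda I_{p}$ with $\lambda\geq0$, and $\lambda>0$ precisely when $W\neq0$.

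The decisive step is to examine $WW^{*}$. By construction it is a positive operator of finite rank at most $p$, lying in the commutant $M_{p}=\left\{ Ind_{B}^{G}\left(\chi\right)\right\} '$. If $W\neq0$, then $WW^{*}\neq0$ has a nonzero eigenvalue, and the associated spectral projection $Q$ is a nonzero \emph{finite-rank} projection in the von Neumann algebra $M_{p}$. I would then invoke Theorem \ref{thm:irrep} (and the realization following it): $M_{p}$ is unitarily equivalent to the multiplication algebra $\left\{ f\left(z^{p}\right):f\in L^{\infty}\left(\mathbb{T}\right)\right\} $ on $L^{2}\left(\mathbb{T}\right)$, so every projection in $M_{p}$ is multiplication by an indicator $\mathbf{1}_{E}\left(z^{p}\right)$, whose range is either trivial (when $E$ is Lebesgue-null) or infinite-dimensional (when $\nu\left(E\right)>0$). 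Thus $M_{p}$ contains no nonzero finite-rank projection, contradicting the existence of $Q$; hence $W=0$. Conceptually this is exactly how formal Fr\"obenius reciprocity fails: for finite orbits $Ind_{B}^{G}\left(\chi\right)$ is a \emph{direct integral}, not a discrete direct sum, of copies of $U_{p}^{\chi}$, with multiplicity governed by diffuse Lebesgue measure, and since $M_{p}\cong L^{\infty}\left(\mathbb{T}\right)$ has no minimal projections, no copy of $U_{p}^{\chi}$ splits off as a genuine subrepresentation.

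As an independent and more transparent check, one may argue directly on $l^{2}\left(\mathbb{Z}\right)$. Writing $W_{k,i}=\left\langle \delta_{k},We_{i}\right\rangle $ for $k\in\mathbb{Z}$, $i\in\mathbb{Z}_{p}$, the covariance relation at elements $\left(0,b\right)\in B$ (using $U_{p}^{\chi}\left(0,b\right)=D_{\chi}\left(b\right)$ and $Ind_{B}^{G}\left(\chi\right)\left(0,b\right)=D_{\chi}\left(b\right)$) forces $W_{k,i}=0$ unless $\widehat{\alpha}^{k}\chi=\widehat{\alpha}^{i}\chi$, i.e.\ unless $k\equiv i\pmod p$; while the relation $WP=T_{1}W$ at the generator $\left(1,0\right)$ yields $W_{k+1,i}=W_{k,i-1}$, and iterating $p$ times gives $W_{k,i}=W_{k-p,i}$. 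Hence each column $We_{i}$ is supported on the progression $\left\{ i+mp:m\in\mathbb{Z}\right\} $ and is constant there, so it lies in $l^{2}\left(\mathbb{Z}\right)$ only if it vanishes identically. This recovers $W=0$ and exhibits the would-be intertwiner as precisely the non-normalizable ``constant diagonal'' that reciprocity produces formally.

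I expect the main obstacle to be the careful justification in the abstract argument that a nonzero spectral projection of $WW^{*}$ genuinely lies in the von Neumann algebra $M_{p}$ and is of finite rank, together with the verification, via the explicit identification $M_{p}\cong\left\{ f\left(z^{p}\right)\right\} $, that no nonzero finite-rank projection can occur in a diffuse $L^{\infty}$-algebra acting on $L^{2}\left(\mathbb{T}\right)$. The direct computation on $l^{2}\left(\mathbb{Z}\right)$ then makes the failure of square-summability fully explicit and serves as a self-contained cross-check.
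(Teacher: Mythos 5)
Your proposal is correct, and it in fact contains two complete arguments. Your ``independent check'' is essentially the paper's own proof: the paper takes a putative intertwiner $W$, writes its columns $\xi^{\left(k\right)}=Wu_{k}\in l^{2}\left(\mathbb{Z}\right)$, and evaluates the intertwining relation at group elements $\left(pt,b\right)$ to get $|\xi_{s}^{\left(k\right)}|=|\xi_{s+pt}^{\left(k\right)}|$ for all $t$, so that square-summability forces $\xi^{\left(k\right)}=0$; the paper does this for $p=3$ ``to simplify notation,'' whereas you organize it by generators --- covariance over $B$ gives the support condition $k\equiv i\pmod p$, and the single relation $WP=T_{1}W$ gives literal (not just modulus) periodicity $W_{k,i}=W_{k-p,i}$ --- and you state it for general $p$. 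Your primary argument, however, is a genuinely different route: from $WW^{*}$ being a nonzero positive finite-rank element of the von Neumann algebra $M_{p}=\left\{ Ind_{B}^{G}\left(\chi\right)\right\} '$ you produce a nonzero finite-rank projection in $M_{p}$, which is impossible because by Theorem \ref{thm:irrep}(2) $M_{p}$ is unitarily equivalent to $\left\{ f\left(z^{p}\right)\,:\,f\in L^{\infty}\left(\mathbb{T}\right)\right\} $ acting on $L^{2}\left(\mathbb{T}\right)$, whose projections $\mathbf{1}_{E}\left(z^{p}\right)$ have rank $0$ or $\infty$. This is clean, and it explains structurally why formal Fr\"obenius reciprocity fails here; it is the same mechanism as the paper's subsequent theorem that $M_{p}$ is diffuse abelian with no minimal projections. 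The trade-off is self-containedness: your abstract route is only as solid as Theorem \ref{thm:irrep}(2), which the paper states but does not prove in detail, whereas the entrywise computation (the paper's proof, and your cross-check) needs nothing beyond the definitions. One small caveat in your conceptual commentary: the decomposition of $Ind_{B}^{G}\left(\chi\right)$ is not a direct integral of \emph{copies of} $U_{p}^{\chi}$ --- if it were a constant field, the commutant would be of the form $I_{p}\otimes\mathscr{B}\left(L^{2}\left(\mathbb{T}\right)\right)$ and would have plenty of minimal projections --- rather, the fibers are pairwise inequivalent $p$-dimensional twists of $U_{p}^{\chi}$ (replace $P$ by $zP$, $z\in\mathbb{T}$). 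This inaccuracy does not affect either of your proofs, since neither uses that remark.
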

\begin{proof}
We sketch the details for $p=3$ to simplify notation. For $p=3$,
\[
P=\begin{pmatrix}0 & 1 & 0\\
0 & 0 & 1\\
1 & 0 & 0
\end{pmatrix},\quad D_{\chi}\left(b\right)=\begin{pmatrix}\chi\left(b\right) & 0 & 0\\
0 & \chi\left(\alpha\left(b\right)\right) & 0\\
0 & 0 & \chi\left(\alpha_{2}\left(b\right)\right)
\end{pmatrix}
\]
and 
\begin{equation}
U_{p}^{\chi}\left(j,b\right)=D_{\chi}\left(b\right)P^{j}\label{eq:10.3}
\end{equation}
while 
\[
\left(Ind_{B}^{G}\left(\chi\right)_{\left(j,b\right)}\xi\right)_{k}=\chi\left(\alpha_{k}\left(b\right)\right)\xi_{k+j}
\]
$\forall\left(j,b\right)\in G$, $\forall k\in\mathbb{Z}$, $\forall\xi\in l^{2}\left(\mathbb{Z}_{p}\right)$. 

Let $W\in L_{G}\left(U_{p}^{\chi},Ind_{B}^{G}\left(\chi\right)\right)$,
and $u_{0},u_{1},u_{2}$ be the canonical basis in $\mathscr{H}\left(U_{p}^{\chi}\right)=\mathbb{C}^{3}$,
where 
\[
U_{p}^{\chi}\left(j,b\right)u_{k}=\chi\left(\alpha_{k+2j}\left(b\right)\right)u_{k+2j}\mod3
\]
$\forall\left(j,b\right)\in G$, $k\in\left\{ 0,1,2\right\} \simeq\mathbb{Z}/3\mathbb{Z}$. 

Set $Wu_{k}=\xi^{\left(k\right)}=(\xi_{s}^{\left(k\right)})_{s\in\mathbb{Z}}\in l^{2}\left(\mathbb{Z}\right)$,
where $\Vert\xi^{\left(k\right)}\Vert^{2}=\sum_{s\in\mathbb{Z}}|\xi_{s}^{\left(k\right)}|^{2}<\infty$. 

It follows that 
\[
WU_{p}^{\chi}\left(j,b\right)u_{k}=Ind_{B}^{G}\left(\chi\right)_{\left(j,b\right)}Wu_{k}
\]
$\forall\left(j,b\right)\in G$, $k\in\left\{ 0,1,2\right\} $. Thus
\[
\chi\left(\alpha_{k+2j}\left(b\right)\right)\xi_{s}^{\left(k+2j\right)_{3}}=\chi\left(\alpha_{s}\left(b\right)\right)\xi_{s+j}^{\left(k\right)},\quad\forall s,j\in\mathbb{Z}.
\]
Now set $j=3t\in3\mathbb{Z}$, and we get 
\[
\chi\left(\alpha_{k}\left(b\right)\right)\xi_{s}^{\left(k\right)}=\chi\left(\alpha_{s}\left(b\right)\right)\xi_{s+3t}^{\left(k\right)}
\]
 and 
\begin{equation}
\left|\xi_{s}^{\left(k\right)}\right|=\left|\xi_{s+3t}^{\left(k\right)}\right|,\quad\forall s,t\in\mathbb{Z}.\label{eq:tail}
\end{equation}
Since $\xi^{\left(k\right)}\in l^{2}\left(\mathbb{Z}\right)$, $\lim_{t\rightarrow\infty}\xi_{s+3t}^{\left(k\right)}=0$.
We conclude from (\ref{eq:tail}) that $\xi^{\left(k\right)}=0$ in
$l^{2}\left(\mathbb{Z}\right)$. \end{proof}
\begin{rem}
The decomposition of $Ind_{B}^{G}\left(\chi\right):G\longrightarrow B\left(l^{2}\left(\mathbb{Z}\right)\right)$
is still a bit mysterious. Recall this representation commutes with
$T_{3}:\left(\xi_{k}\right)\longmapsto\left(\xi_{k+3}\right)$; or
equivalently via $f\left(z\right)\longmapsto z^{3}f\left(z\right)$.
So it is not irreducible. 

The reason for $L_{G}\left(U_{p}^{\chi},Ind_{B}^{G}\left(\chi\right)\right)=0$,
e.g., in the case of $p=3$, is really that there is no isometric
version of the $3\times3$ permutation matrix $P$ in $T$, where
$T\delta_{k}=\delta_{k-1}$. 

If $W:\mathbb{C}^{3}\longrightarrow l^{2}\left(\mathbb{Z}\right)$
is bounded, $WP=TW$, then applying the polar decomposition 
\[
W=\left(W^{*}W\right)^{1/2}V
\]
with $V:\mathbb{C}^{3}\rightarrow l^{2}$ isometric, and $VP=TV$,
or 
\[
P=V^{*}TV,\quad P^{j}=V^{*}T_{j}V.
\]
So $T_{j}$ has the form
\[
T_{j}=\begin{pmatrix}P^{j} & *\\
0 & *
\end{pmatrix}.
\]
Pick $u\in\mathbb{C}^{3}$; then $\left\langle u,T_{j}u\right\rangle =\left\langle u,P^{j}u\right\rangle $.
However, $\left\langle u,T_{j}u\right\rangle \rightarrow0$ by Riemann-Lebesgue;
while $\left\langle u,P^{j}u\right\rangle \nrightarrow0$ since $P^{3}=I$. \end{rem}
\begin{defn}
A group $L$ acts on a set $S$ if there is a mapping 
\[
L\times S\longrightarrow S,\quad\left(\lambda,s\right)\longmapsto\lambda\left[s\right],\quad\lambda\lambda'[s]=\lambda[\lambda'[s]]
\]
and $\lambda^{-1}$ is the inverse of $S\ni s\longmapsto\lambda\left[s\right]\in S$,
a bijection. Often $S$ will have the structure of a topological space
or will be equipped with a $\sigma$-algebra of measurable sets.\end{defn}
\begin{rem}
While we have stressed discrete decompositions of induced representations,
the traditional literature has stressed direct integral decompositions,
see e.g., \cite{Mac88}. A more recent use of continuous parameters
in decompositions is a construction by J. Packer et al \cite{LPT01}
where ``wavelet sets'' arise as sets of support for direct integral
measures. In more detail, starting with a wavelet representation of
a certain discrete wavelet group (an induced representation of a semidirect
product), the authors in \cite{LPT01} establish a direct integral
where the resulting measure is a subset of R called ``wavelet set.''
These wavelet sets had been studied earlier, but independently of
representation theory. In the dyadic case, a wavelet set is a subset
of $\mathbb{R}$ which tiles $\mathbb{R}$ itself by a combination
of $\mathbb{Z}$-translations, and scaling by powers of 2. In the
Packer et. al. case, $\mathbb{R}\hookrightarrow K$, and
\[
E\;\mbox{wavelet set}\Longleftrightarrow\int_{E}^{\oplus}Ind_{B}^{G}\left(\chi_{t}\right)dt\;\mbox{is the wavelet representation in}\:L^{2}\left(\mathbb{R}\right).
\]
\end{rem}
\begin{thm}
The group $L:=K\rtimes_{\widehat{\alpha}}\mathbb{Z}$ acts on the
set $Rep\left(G\right)$ by the following assignment: 

Arrange it so that all the representations $U^{\chi}:=Ind_{B}^{G}\left(\chi\right)$,
$\chi\in K$, acts on the same $l^{2}$-space. Then $Rep\left(G\right)\sim Rep\left(G,l^{2}\right)$. 
\end{thm}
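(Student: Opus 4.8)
The plan is to realize the $L$-action on $Rep(G)$ as the transport, under the (injective) parametrization $\chi\mapsto U^{\chi}$, of the natural action of $L=K\rtimes_{\widehat{\alpha}}\mathbb{Z}$ on the character space $K$. Writing an element of $L$ as $(n,\psi)$ with $n\in\mathbb{Z}$ and $\psi\in K$ (matching the order convention for $G$ in \eqref{eq:p2.5-1}), and writing $K$ multiplicatively, I would first set
\[
(n,\psi)\cdot\chi:=\psi\,\widehat{\alpha}^{\,n}(\chi),\qquad\chi\in K,
\]
and then declare $(n,\psi)\cdot U^{\chi}:=U^{(n,\psi)\cdot\chi}=U^{\psi\,\widehat{\alpha}^{\,n}(\chi)}$. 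In the common model \eqref{eq:p1} on $l^{2}(\mathbb{Z})$ the parameter is recovered from the representation via $U^{\chi}_{(0,b)}\delta_{0}=\chi(b)\delta_{0}$, so $\chi\mapsto U^{\chi}$ is injective into $Rep(G,l^{2})$ and the formula above is an unambiguous self-map of the family $\{U^{\chi}:\chi\in K\}$.

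First I would verify the action axioms, and this is exactly where the semidirect (rather than direct) structure of $L$ is used. The identity $(0,1)$ fixes every $\chi$, and using the product law $(m,\phi)(n,\psi)=(m+n,\widehat{\alpha}^{\,m}(\psi)\,\phi)$ in $L$ one computes
\[
(m,\phi)\cdot\big((n,\psi)\cdot\chi\big)=\phi\,\widehat{\alpha}^{\,m}\!\big(\psi\,\widehat{\alpha}^{\,n}(\chi)\big)=\widehat{\alpha}^{\,m}(\psi)\,\phi\,\widehat{\alpha}^{\,m+n}(\chi)=\big((m,\phi)(n,\psi)\big)\cdot\chi,
\]
so that $(n,\psi)\mapsto\big(\chi\mapsto(n,\psi)\cdot\chi\big)$ is a genuine left action of $L$ on $K$, hence so is its transport to $\{U^{\chi}\}$. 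Next I would display the two factors of this action inside $Rep(G,l^{2})$. The $\mathbb{Z}$-factor $(n,1)$ is implemented by the bilateral shift: by Lemma \ref{lem:pcom}, equation \eqref{eq:p4}, one has $T_{n}U^{\chi}T_{n}^{*}=U^{\widehat{\alpha}^{\,n}\chi}$, a genuine unitary conjugation on the single space $l^{2}(\mathbb{Z})$. The $K$-factor $(0,\psi)$ is the relabeling $U^{\chi}\mapsto U^{\psi\chi}$; at the level of the induced model \eqref{eq:2.9} it is the fibrewise multiplication $f\mapsto\psi f$, $(\psi f)(j,b)=\psi(b)f(j,b)$, which carries $\mathscr{H}^{\chi}$ isometrically onto $\mathscr{H}^{\psi\chi}$. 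Since every representation in the $L$-orbit of a given $U^{\chi}$ is again of the form $U^{\chi'}$, and each such is carried by $l^{2}(\mathbb{Z})$ (respectively $l^{2}(\mathbb{Z}/p\mathbb{Z})$ in the finite-orbit case), the whole orbit lies in $Rep(G,l^{2})$; this is the content of the assertion $Rep(G)\sim Rep(G,l^{2})$.

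The one point requiring care — and the only real obstacle — is that the $K$-factor must not be conflated with a unitary equivalence. Whereas the shift conjugation realizes $U^{\chi}\cong U^{\widehat{\alpha}^{\,n}\chi}$ and therefore descends to the equivalence classes $Class(U^{\chi})$ of \eqref{eq:20}, the multiplication $f\mapsto\psi f$, though unitary from $\mathscr{H}^{\chi}$ onto $\mathscr{H}^{\psi\chi}$, fails to intertwine the right-translation action: the two sides differ by the factor $\psi(\alpha_{k}(b))$, so $U^{\chi}$ and $U^{\psi\chi}$ are in general inequivalent and the $K$-factor genuinely moves between classes. Accordingly I would formulate the result as an action of $L$ on the set $Rep(G,l^{2})$ (equivalently, on the parameter set $K$ via $\chi\mapsto U^{\chi}$), of which precisely the $\mathbb{Z}$-subaction through $\widehat{\alpha}$ passes to $Class(\cdot)$ — in agreement with the earlier statement that $Class(U^{\chi})$ depends only on the orbit $O(\chi)$ of \eqref{eq:18}. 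This is also the covariance underlying the direct-integral picture $\int_{K}^{\oplus}U^{\chi}\,d\nu(\chi)$ of the preceding remark: $L$ permutes the base $K$ by $(n,\psi)\cdot\chi$, hence the field $\{\mathscr{H}^{\chi}\}_{\chi\in K}$, leaving the total space $L^{2}(K)\otimes l^{2}(\mathbb{Z})$ invariant.
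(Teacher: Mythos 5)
Your argument is correct, but there is nothing in the paper to compare it against: this theorem is stated without any proof (it is the final statement of Section 4, followed immediately by Section 5), so your write-up is best read as supplying the missing argument, and it does so using exactly the paper's own toolkit. The verification of the action axioms from the semidirect product law in $L$ (mirroring the convention \eqref{eq:p2.5-1}), the recovery of $\chi$ from $U_{(0,b)}^{\chi}\delta_{0}=\chi\left(b\right)\delta_{0}$ via \eqref{eq:p1} so that the transport along $\chi\mapsto U^{\chi}$ is well defined, and the implementation of the $\mathbb{Z}$-factor by the shift conjugation $T_{n}U^{\chi}T_{n}^{*}=U^{\widehat{\alpha}^{n}\chi}$ of Lemma \ref{lem:pcom}, eq. \eqref{eq:p4}, are all sound. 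Your closing caveat is also the right one, and it sharpens the paper's rather vague formulation: the $K$-factor $U^{\chi}\mapsto U^{\psi\chi}$ is realized by a unitary $\mathscr{H}^{\chi}\rightarrow\mathscr{H}^{\psi\chi}$ that is \emph{not} an intertwiner, so only the $\mathbb{Z}$-subaction descends to the classes $Class\left(U^{\chi}\right)$ of \eqref{eq:20}, in agreement with the paper's earlier theorem that $Class\left(U^{\chi}\right)$ depends only on the orbit $O\left(\chi\right)$ of \eqref{eq:18}. Two small refinements you should make explicit: (i) the action you construct lives on the subfamily $\left\{ U^{\chi}\::\:\chi\in K\right\} \subset Rep\left(G,l^{2}\right)$, equivalently on the parameter set $K$ itself, not on all of $Rep\left(G,l^{2}\right)$; state it that way, since this is evidently what the theorem's phrase ``arrange it so that all the $U^{\chi}$ act on the same $l^{2}$-space'' intends; (ii) your parenthetical ``respectively $l^{2}\left(\mathbb{Z}/p\mathbb{Z}\right)$ in the finite-orbit case'' conflates $Ind_{B}^{G}\left(\chi\right)$ with its irreducible constituents $U_{p}^{\chi}$: the induced representation itself acts on $l^{2}\left(\mathbb{Z}\right)$ for \emph{every} $\chi$, finite orbit or not, which in fact makes your orbit-stability point simpler.
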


\section{Induction and Bratteli diagrams}

A Bratteli diagram is a group $G$ with vertices $V$, and edges $E\subset V\times V\backslash\left\{ \mbox{diagonal}\right\} $.
It is assumed that 
\[
V=\bigcup_{n=0}^{\infty}V_{n},
\]
as a disjoint union in such a way that the edges $E$ in $G$ can
be arranged in a sequnce of lines $V_{n}\rightarrow V_{n+1}$, so
no edge links pairs of vertices at the same level $V_{n}$. With a
system of inductions and restrictions one then creates these diagrams. 

Let $G$ be created as follows: For a given $V_{n}$, let the vertices
in $V_{n}$ represent irreducible representations of some group $G_{n}$,
and assume $G_{n}$ is a subgroup in a bigger group $G_{n+1}$. For
the vertices in $V_{n+1}$, i.e., in the next level in the Bratteli
diagram $G$, we take the irreducible representations occurring in
the decomposition of each of the restrictions:
\[
Ind_{G_{n}}^{G_{n+1}}\left(L\right)\Big|_{G_{n}},
\]
so the decomposition of the restriction of the induced representation;
see Figure \ref{fig:s5-1}.

Multiple lines in a Bratteli diagram count the occurrence of irreducible
representations with multiplicity; these are called \emph{multiplicity
lines}. 

Counting multiplicity lines at each level $V_{n}\rightarrow V_{n+1}$
we get a so called \emph{incidence matrix}. For more details on the
use of Bratteli diagrams in representation theory, see \cite{MR1804950,MR1869063,MR1889566,MR2030387,MR3150704,MR3266986,150801253B}.

\begin{figure}[H]
\includegraphics[width=0.6\textwidth]{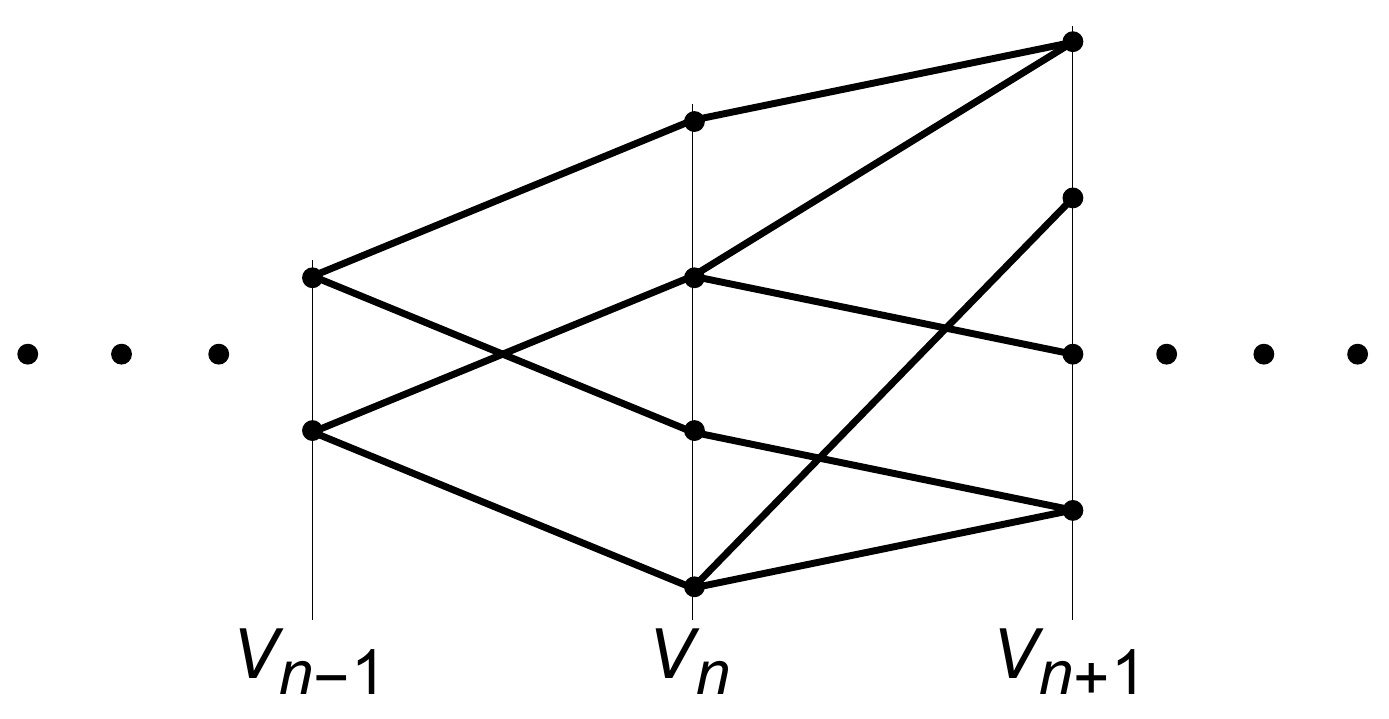}

\protect\caption{\label{fig:s5-1}Part of a Bratteli diagram.}
\end{figure}

While there is a host of examples from harmonic analysis and dynamical
systems; see the discussion above, we shall concentrate below on a
certain family of examples when the discrete group $B$ in the construction
(from Sections \ref{sec:ind} and \ref{sec:irr}) arises as a discrete
Bratteli diagram, and, as a result, its Pontryagin dual compact group
$K$ is a solenoid; also often called a compact Bratteli diagram. 

Example \ref{exa:A} below illustrates this in the special case of
constant incidence diagrams, but much of this discussion applies more
generally. 
\begin{example}
Let $B=\mathbb{Z}\left[\frac{1}{2}\right]$, dyadic rationals in $\mathbb{R}$.
Let $\alpha_{2}=$ multiplication by $2$, so that $\alpha_{2}\in Aut\left(B\right)$,
and set $G_{2}:=B\rtimes_{\alpha_{2}}\mathbb{Z}$. 

Recall the Baumslag-Solitar group $G_{2}$ with two generators $\left\{ u,t\right\} $,
satisfying $utu^{-1}=t^{2}$. With the correspondence, 
\[
u\longleftrightarrow\begin{pmatrix}2 & 0\\
0 & 1
\end{pmatrix},\quad t\longleftrightarrow\begin{pmatrix}1 & 1\\
0 & 1
\end{pmatrix}
\]
$\alpha_{2}$ acts by conjugation, 
\[
\begin{pmatrix}2 & 0\\
0 & 1
\end{pmatrix}\begin{pmatrix}1 & 1\\
0 & 1
\end{pmatrix}\begin{pmatrix}2^{-1} & 0\\
0 & 1
\end{pmatrix}=\begin{pmatrix}1 & 2\\
0 & 1
\end{pmatrix}.
\]
In particular, $T^{k}=\begin{pmatrix}1 & k\\
0 & 1
\end{pmatrix}$, $k=1,2,3,\cdots$ . 

We have 
\[
Ind_{\mathbb{Z}\left[\frac{1}{2}\right]}^{G_{2}}\left(\chi\right)\begin{pmatrix}2^{j} & b\\
0 & 1
\end{pmatrix}=D_{\chi}\left(b\right)T^{j}.
\]
Moreover, $\left\{ 2^{j}\::\:j\in\mathbb{Z}\right\} \simeq\mathbb{Z}\left[\frac{1}{2}\right]\backslash G_{2}\simeq\mathbb{Z}$.
Set $\left(T\xi\right)_{k}:=\xi_{k+1}$, $\xi\in l^{2}\left(\mathbb{Z}\right)$,
and let 
\[
D_{\chi}\left(b\right)=\begin{pmatrix}\ddots\\
 & \chi\left(b\right) &  &  & \smash{\scalebox{3}{0}}\\
 &  & \chi\left(2b\right)\\
 &  &  & \chi\left(3b\right)\\
\smash{\scalebox{3}{0}} &  &  &  & \ddots
\end{pmatrix},\quad\forall b\in\mathbb{Z}\left[\tfrac{1}{2}\right]
\]
Note that 
\[
TD_{\chi}\left(b\right)=D_{\chi}\left(\alpha_{2}\left(b\right)\right)T=D_{\widehat{\alpha}_{2}\chi}\left(b\right)T,\quad\forall b\in\mathbb{Z}\left[\tfrac{1}{2}\right].
\]

\end{example}

\begin{example}
\label{exa:A}Let $A$ be a $d\times d$ matrix over $\mathbb{Z}$,
$\det A\neq0$; then 
\[
\mathbb{Z}^{d}\hookrightarrow A^{-1}\mathbb{Z}^{d}\hookrightarrow A^{-2}\mathbb{Z}^{d}\hookrightarrow\cdots
\]
and set 
\[
B_{A}:=\bigcup_{k\geq0}A^{-k}\mathbb{Z}^{d}.
\]
Let $\alpha_{A}=$ multiplication by $A$, so that $\alpha_{A}\in Aut\left(B_{A}\right)$,
and $G_{A}:=B_{A}\rtimes_{\alpha_{A}}\mathbb{Z}$. 

The corresponding standard wavelet representation $U_{A}$ will now
act on the Hilbert space $L^{2}\left(\mathbb{R}^{d}\right)$ with
$d$-dimensional Lebesgue measure; and $U_{A}$ is a unitary representation
of the discrete matrix group $G_{A}=\left\{ \left(j,\beta\right)\right\} _{j\in\mathbb{Z},\beta\in B_{A}}$,
specified by
\begin{equation}
\left(j,\beta\right)\left(k,\gamma\right)=\left(j+k,\beta+A^{j}\gamma\right),\;\mbox{and}\label{eq:w1}
\end{equation}
defined for all $j,k\in\mathbb{Z}$, and $\beta,\gamma\in B_{A}$.
Alternatively, the group from (\ref{eq:w1}) may be viewed in matrix
form as follows:
\[
\left(j,\beta\right)\longrightarrow\begin{pmatrix}A^{j} & \beta\\
0 & 1
\end{pmatrix},\quad\mbox{and }\;\left(k,\gamma\right)\longrightarrow\begin{pmatrix}A^{k} & \gamma\\
0 & 1
\end{pmatrix}.
\]

The wavelet representation $U_{A}$ of $G_{A}$ acting on $L^{2}\left(\mathbb{R}^{d}\right)$
is now 
\begin{equation}
U_{A}\left(A\right)\left(x\right):=\left(\det A\right)^{-\frac{j}{2}}f\left(A^{-j}\left(x-\beta\right)\right),
\end{equation}
defined for all $f\in L^{2}\left(\mathbb{R}^{d}\right)$, $\forall j\in\mathbb{Z}$,
$\forall\beta\in B_{A}$, and $x\in\mathbb{R}^{d}$.\end{example}
\begin{acknowledgement*}
The co-authors thank the following colleagues for helpful and enlightening
discussions: Professors Daniel Alpay, Sergii Bezuglyi, Ilwoo Cho,
Ka Sing Lau, Azita Mayeli, Paul Muhly, Myung-Sin Song, Wayne Polyzou,
Keri Kornelson, and members in the Math Physics seminar at the University
of Iowa. 

The authors are very grateful to an anonymous referee for a list of
corrections and constructive suggestions. The revised paper is now
much better, both in substance, and in its presentation.
\end{acknowledgement*}
\bibliographystyle{amsalpha}
\bibliography{ref}

\end{document}